\numberwithin{equation}{section}
\newcommand{\bQ}{\mathbb{Q}}
\newcommand{\bP}{\mathbb{P}}
\newcommand{\vol}{\operatorname{vol}}
\newtheorem{definition}{Definition}[section]
\newtheorem{lemma}[definition]{Lemma}
\newtheorem{theorem}[definition]{Theorem}
\newtheorem{proposition}[definition]{Proposition}
\newtheorem{corollary}[definition]{Corollary}
\newtheorem{remark}[definition]{Remark}
\theoremstyle{definition}
\newtheorem{remark}{Remark}[section]
\begin{document}

\title{Explicit effective birationality for singular surfaces}

\author{Pinxian Bie}

\thanks{\textit{Key words and Phrases}: weak Fano surface, log canonical surface, explicit boundedness, effective birationality.}
\address{\rm School of Mathematical Sciences, Fudan University, Shanghai 200433, China}
\email{23110180002@m.fudan.edu.cn}

\subjclass[2020]{Primary 14E30, Secondary 14E05, 14J17}
\date{\today}

\begin{abstract}
We give some explicit upper bounds on the effective birationality of the canonical or anti-canonical system for a singular surface. In particular, we show that for any surface $X$ with $\epsilon$-lc singularity and the canonical divisor $K_X$ or the anti-canonical divisor $-K_X$ is big and nef, then there exists explicit natural numbers $m$ and $l$ depending only on $\epsilon$ such that $|mK_X|$ or $|-lK_X|$ defines a birational map. Although these explicit values are expected to be far from optimal, they are the first explicit upper bounds of this type for surfaces. 
\end{abstract}

\maketitle
\tableofcontents

\section{Introduction}

We work over the field of complex number $\mathbb C$. 

In his celebrated work \cite{Bir19}, Birkar established the effective birationality for $\epsilon$-lc Fano varieties. This result was later extended to arbitrary $\epsilon$-lc polarized varieties in \cite{Bir23}. Both papers rely on the notion of potential birationality of divisors and numerous properties of bounded families, rendering the explicit determination of a bound for effective birationality highly challenging. It is therefore very interesting to explore whether explicit bounds for the integer $m$ can be given, at least in low dimensions. Some progress has been made in this direction for surfaces and threefolds with at most canonical singularities \cite{CJ16, CJ20, CC15}. However, few results \cite{Zhu23} address varieties with singularities worse than canonical. In this paper, we derive explicit upper bounds for $m$ in the case of $\epsilon$-log canonical surfaces, considering both the weak Fano case and the case of general type.

\begin{theorem}\label{thm: weak fano surface explicit birationality}
Let $X$ be a $\epsilon$-lc projective weak Fano surface, then let $m_0=96\lfloor2\left(\frac{2}{\epsilon}\right)^{\frac{128}{\epsilon^5}}\rfloor !$, $|-km_0K_X|$ always defines a birational morphism for all $k\geq (2+\lfloor\frac{4}{\epsilon}\rfloor)$. In particular, let $m_1=96(2+\lfloor\frac{4}{\epsilon}\rfloor)\lfloor2\left(\frac{2}{\epsilon}\right)^{\frac{128}{\epsilon^5}}\rfloor !$, then $|-m_1K_X|$ defines a birational morphism. Moreover, if $m_2=5m_1-1$, then for every $m'\geq m_2$, $|-m'K_X|$ defines a birational map.
\end{theorem}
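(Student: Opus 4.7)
The plan is to establish the three assertions in sequence.

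\emph{Stage 1 (index).} First I would show that $m_0 K_X$ is Cartier. Using the boundedness of $\epsilon$-lc weak Fano surfaces together with explicit index bounds for log canonical surface singularities --- results expected to be developed earlier in the paper --- there is an integer $N(\epsilon) \le 2(2/\epsilon)^{128/\epsilon^5}$ such that the Cartier index of $K_X$ divides $N(\epsilon)$. Since $m_0$ carries the factor $\lfloor 2(2/\epsilon)^{128/\epsilon^5}\rfloor !$, the index divides $m_0$, so $D := -m_0 K_X$ is a big and nef Cartier divisor. The additional factor $96$ absorbs further divisibility requirements appearing in the Reider-type step below.

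\emph{Stage 2 (birationality of $|kD|$ for $k \ge k_0 := 2 + \lfloor 4/\epsilon \rfloor$).} Pass to the minimal resolution $\pi : Y \to X$ and write $K_Y + \Delta_Y = \pi^* K_X$ with $\Delta_Y \ge 0$ of exceptional coefficients at most $1-\epsilon$. Pulling back yields the identity
\[
\pi^*(kD) - K_Y = \Delta_Y + (km_0 + 1)\pi^*(-K_X),
\]
exhibiting $\pi^*(kD) - K_Y$ as an effective plus big-nef $\mathbb{Q}$-divisor. Kawamata--Viehweg vanishing then gives $H^i(Y, \pi^*(kD)) = 0$ for $i > 0$, hence $H^i(X, kD) = 0$. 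Birationality of $|kD|$ I would extract via a Reider-type criterion on $Y$ applied to $\pi^*(kD)$; the threshold $k \ge 2 + \lfloor 4/\epsilon \rfloor$ should arise from forcing the intersection of the nef contribution with each exceptional curve (of $\Delta_Y$-coefficient up to $1-\epsilon$) to exceed Reider's bound, which naturally introduces $\lfloor 4/\epsilon \rfloor$ upon dividing by $\epsilon$, while the factor $96$ in $m_0$ secures the required bound $(km_0+1)^2 K_X^2 \gg 0$.

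\emph{Stage 3 (extension to $m' \ge m_2 = 5m_1 - 1$).} The assertion $|-m_1 K_X|$ birational is Stage 2 with $k = k_0$. For $m' \ge m_2$ I would divide $m' = q m_0 + r$ with $0 \le r < m_0$. Then $q m_0 \ge m_2 - (m_0 - 1) = (5k_0 - 1) m_0$, giving $q \ge 5k_0 - 1 \ge k_0$, so $|q D|$ is birational. It then suffices to exhibit a nonzero section of the reflexive sheaf $\mathcal{O}_X(-rK_X)$: multiplying into $H^0(X, qD)$ provides a birational subfamily of $|-m' K_X|$. The nonvanishing $h^0(X, \mathcal{O}_X(-r K_X)) > 0$ for $0 \le r < m_0$ I would deduce from Riemann--Roch on $X$ using $\chi(\mathcal{O}_X) = 1$ (rationality, since $\epsilon$-lc weak Fano surfaces are rationally connected) together with $K_X^2 > 0$, and Kawamata--Viehweg vanishing on $Y$ applied to $\pi^*(-rK_X)$, which forces $h^0 = \chi \ge 1$.

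\emph{Main obstacle.} I expect the key difficulty to lie in Stage 2: converting the $\epsilon$-lc discrepancy bounds and the big-nef class $-K_X$ into a clean Reider-type numerical criterion on the resolution, and matching the precise threshold $2 + \lfloor 4/\epsilon \rfloor$. Stage 1 is a citation of the index bound, and Stage 3 is essentially a Chinese-remainder-flavored packaging of Stage 2 with a Riemann--Roch nonvanishing input.
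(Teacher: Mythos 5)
Your Stage 1 matches the paper (it is exactly Lemma \ref{lem: index epsilonlc fano surface}), but the two later stages have genuine gaps. In Stage 2 the decisive step is precisely the part you leave as a plan: applying a Reider-type criterion to $\pi^*(kD)=K_Y+\Delta_Y+(km_0+1)\pi^*(-K_X)$ does not go through naively, because the ``adjoint'' part $\Delta_Y+(km_0+1)\pi^*(-K_X)$ is neither integral nor nef (one can have $\Delta_Y\cdot E<0$ for exceptional curves $E$), and $\pi^*(-K_X)$ has degree zero on every exceptional curve, so Reider's numerical hypotheses cannot be arranged simply by increasing $k$; moreover nothing in your sketch explains why the resulting map is a \emph{morphism}, which is part of the statement. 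Your reading of the factor $96$ is also off: in the paper it is not a divisibility fudge for a Reider step but the constant in the Koll\'ar--Fujino effective base point freeness theorem, which makes $|-96I_0K_X|$ base point free; this both yields $h^0(-96I_0K_X)\geq 2$ (since $-K_X$ is big) and gives the ``morphism'' assertion, and then Zhu's Theorem \ref{thm: effective birationality with h^0 > 2} (applied with $H=-m_0K_X$, $L=0$, noting $L-K_X=-K_X$ is nef) supplies the threshold $k\geq 2+\lfloor\frac{4}{\epsilon}\rfloor$ as a black box. If you insist on a Reider-style argument you would essentially be reproving Zhu's theorem, and you would still owe the base-point-freeness input.

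Stage 3 contains a concrete error. You need $h^0(X,\mathcal{O}_X(-rK_X))\geq 1$ for every residue $0\leq r<m_0$, but $-rK_X$ is in general a non-Cartier Weil divisor: Riemann--Roch then carries local correction terms at the singular points which can be negative and large compared with $r(r+1)K_X^2$ when $r$ is small relative to the Cartier index (note $K_X^2$ may be as small as $1/I_0$), so ``$h^0=\chi\geq 1$'' does not follow; indeed klt del Pezzo surfaces with $|-K_X|=\emptyset$ are known to exist, so the blanket nonvanishing you invoke fails for some $r$. The paper's treatment of $m'\geq m_2$ requires no nonvanishing at all: write $-m'K_X=K_X+5(-m_1K_X)+(m'-m_2)(-K_X)$; since $|-m_1K_X|$ is birational and $\dim X=2$, Lemma \ref{lem: potentially birationality}(2) makes $5(-m_1K_X)$ potentially birational, adding the big divisor $(m'-m_2)(-K_X)$ preserves potential birationality, and Lemma \ref{lem: potentially birationality}(1) then shows $|-m'K_X|$ defines a birational map. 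You should replace your Stage 3 by this (or an equivalent) argument.
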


We have the following corollary, which is quickly implied by \ref{thm: weak fano surface explicit birationality} and the fact that exceptional Fano surfaces are $\frac{1}{42}$-lc.

\begin{corollary}\label{cor: 1/42 exceptional fano surface}
For any exceptional Fano surface $X$, take $m_0=96\lfloor2\cdot (84)^{128\cdot 42^5}\rfloor !$, then the linear system $|-km_0K_X|$ always defines a birational morphism for all $k\geq 170$. Moreover, for any $m'\geq 850m_0-1$, $|-m'K_X|$ defines a birational map.
\end{corollary}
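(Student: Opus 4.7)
The plan is to derive the corollary as a direct application of Theorem \ref{thm: weak fano surface explicit birationality}, using the classical fact that every exceptional Fano surface is $\frac{1}{42}$-lc. The only real work is to verify that exceptional Fano surfaces fit the hypothesis of the theorem and to carry out the numerical substitution $\epsilon = \tfrac{1}{42}$ into the bounds.

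First I would recall that an exceptional Fano surface $X$ is by definition a Fano variety (so $-K_X$ is ample, hence big and nef, placing $X$ in the weak Fano class covered by Theorem \ref{thm: weak fano surface explicit birationality}) satisfying a strong non-log-canonical condition that forces its global log canonical threshold to be bounded below. A result of Alexeev (together with subsequent refinements) gives that such surfaces are $\frac{1}{42}$-lc, and I would cite this as the only external input needed; the exponent $42$ traces back to the same $(2,3,7)$-type extremal configuration that governs the Alexeev--Borisov bound for $\alpha$-invariants.

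Next I would substitute $\epsilon = \tfrac{1}{42}$ into each of the quantities in Theorem \ref{thm: weak fano surface explicit birationality}. The quantity $\tfrac{2}{\epsilon}$ becomes $84$, the exponent $\tfrac{128}{\epsilon^{5}}$ becomes $128\cdot 42^{5}$, so
\[
m_0 = 96\lfloor 2\cdot(84)^{128\cdot 42^{5}}\rfloor!,
\]
matching the statement. The threshold $k \geq 2+\lfloor \tfrac{4}{\epsilon}\rfloor$ becomes $k\geq 2+168 = 170$, so Theorem \ref{thm: weak fano surface explicit birationality} gives birationality of $|-km_0 K_X|$ for all $k\geq 170$. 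Finally, $m_1 = 170\, m_0$ and $m_2 = 5m_1 - 1 = 850\, m_0 - 1$, which gives the stable birationality range $m'\geq 850 m_0 - 1$.

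There is no serious obstacle to this argument; the corollary is essentially a substitution. The only point requiring care is citing the correct reference for the $\tfrac{1}{42}$-lc bound for exceptional Fano surfaces and ensuring that the convention for exceptionality used there agrees with the one implicit in Theorem \ref{thm: weak fano surface explicit birationality}. Once that is in place, the statement follows immediately.
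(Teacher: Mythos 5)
Your proposal is correct and takes essentially the same approach as the paper: invoke the fact that exceptional Fano surfaces are $\frac{1}{42}$-lc (the paper cites \cite{Liu23} for this, with exceptionality meaning $(X,G)$ is klt for every $G\geq 0$ with $K_X+G\sim_{\mathbb{R}}0$) and then substitute $\epsilon=\frac{1}{42}$ into Theorem \ref{thm: weak fano surface explicit birationality}. Your numerical checks ($2+\lfloor 4/\epsilon\rfloor=170$, $m_1=170m_0$, $m_2=850m_0-1$) match the paper's bounds exactly.
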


The situation of the general type case is very different, because we can't hope to give an explicit $m$ such that the linear system $|mK_X|$ defines a birational morphism for all such $X$, as the Cartier index of $K_X$ is not bounded. This also gives a hint that the methods we use to prove Theorem \ref {thm: weak fano surface explicit birationality} and the following Theorem \ref {thm: general type surface explicit birationality} are quite different.

\begin{theorem}\label{thm: general type surface explicit birationality}
Let $X$ be a projective $\epsilon$-lc surface of general type, such that $K_X$ is big and nef. Then let $m_0=\max \{2v,\lfloor\sqrt{42\cdot 84^{128\cdot42^5+168}\cdot v} \rfloor\}$ where $$v=64(\max\{\lceil\frac{192}{\epsilon}\rceil+1,(45(\lceil\frac{2}{\epsilon}\rceil!)^2+1)\})^2$$ such that there exists some $m^*\leq m_0$, $|m^*K_X|$ defines a birational map. In particular, $|(m_0!)K_X|$ defines a birational map for all such $X$. Moreover, for every $m'\geq 5(m_0!)+1$, $|m'K_X|$ defines a birational map for all $X$.
\end{theorem}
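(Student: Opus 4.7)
The plan is to establish the existence of an integer $m^* \le m_0$ such that $|m^* K_X|$ defines a birational map, from which the remaining two assertions will follow by auxiliary arguments. First, birationality of $|(m_0!) K_X|$ is immediate from that of $|m^* K_X|$: since $m^* \in \{1, \ldots, m_0\}$ one has $m^* \mid m_0!$, and if $|D|$ is birational then $|kD|$ is birational for every $k \ge 1$ (the larger system contains a Veronese re-embedding of the former). The final claim, about $|m' K_X|$ for $m' \ge 5(m_0!) + 1$, will be handled by a Reider/Nadel cutting-out argument fed by the birationality of $|(m_0!) K_X|$.

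For the key existence step, I would first extract an explicit lower bound $\vol(K_X) \ge 1/(42 \cdot 84^{128\cdot 42^5+168})$ from the paper's preceding analysis of $\epsilon$-lc surface singularities; the constants $42 \cdot 84^{128\cdot 42^5}$ already govern the volume of exceptional Fano surfaces in Corollary \ref{cor: 1/42 exceptional fano surface}, and the extra factor $84^{168}$ should absorb the passage from the Fano to the general-type setting (likely through a Shokurov-type reduction). Next, I would pass to the minimal resolution $\pi \colon Y \to X$ and write $\pi^* K_X = K_Y + B$, with $B$ effective, supported on $\Exc(\pi)$, and with coefficients in $[0, 1-\epsilon]$. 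A Reider-type effective-birationality criterion applied to $|\lfloor m(K_Y + B) \rfloor|$ then yields birationality once $m^2 \vol(K_X) \ge c \cdot v$ for a universal $c$, producing some $m^* \le \max\{2v, \lfloor\sqrt{42 \cdot 84^{128\cdot 42^5+168}\cdot v}\rfloor\} = m_0$. The quantity $v$ encodes the necessary local intersection-theoretic positivity: the term $\lceil 192/\epsilon \rceil + 1$ governs separation of two general points along exceptional components of $\pi$, while $(\lceil 2/\epsilon\rceil!)^2$ reflects the local Cartier-index-type bound attached to an $\epsilon$-lc singular point.

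For the last assertion, write $m' K_X = K_X + (m'-1) K_X$ with $L := (m'-1) K_X$ big and nef of self-intersection $L^2 \ge (5 m_0!)^2 \vol(K_X)$. Using the birationality of $|(m_0!) K_X|$ to produce effective sections of multiples of $L$, one constructs an effective $\Qq$-divisor $\Delta \sim_{\Qq} \lambda L$ with $\lambda < 1$ that makes $(X, \Delta)$ non-klt at two prescribed general points; Nadel vanishing combined with Kawamata-Viehweg then lifts sections of $m' K_X = K_X + L$ that separate them. The coefficient $5$ in $5(m_0!) + 1$ matches Bombieri's constant, ensuring $L$ carries enough positivity beyond the birational level $(m_0!) K_X$ for this cutting-out procedure to succeed.

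The main obstacle will be the explicit Reider-type criterion of the second paragraph: extending it uniformly to $\epsilon$-lc (rather than merely klt) surfaces requires a delicate interplay between the coefficients of $B$ approaching $1-\epsilon$ and the intersection theory on the minimal resolution $Y$, which is presumably what forces the elaborate constants packed inside $v$. A secondary challenge is deriving the general-type volume lower bound cleanly from the exceptional-Fano classification used in Corollary \ref{cor: 1/42 exceptional fano surface}; any slack there propagates quadratically into $m_0$ via the $\sqrt{\cdot}$ factor.
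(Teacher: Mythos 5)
Your skeleton (a volume lower bound, an effective criterion producing some $m^*\le m_0$, passing to $m_0!$ by divisibility, and the $m'\ge 5(m_0!)+1$ step via a non-klt cutting-out argument) matches the paper's, and your final paragraph is essentially the paper's use of Lemma \ref{lem: potentially birationality} (the $5$ is $2d+1$ for $d=2$ there, not Bombieri's constant). The genuine gap is the central step. You assert that a ``Reider-type effective-birationality criterion'' applied to $|\lfloor m(K_Y+B)\rfloor|$ on the minimal resolution yields birationality once $m^2\vol(K_X)\ge c\cdot v$ for a universal $c$; but that statement is precisely the hard content of the theorem and is not available off the shelf. Reider-type results require a nef and big \emph{Cartier} divisor on a smooth surface, while here the Cartier index of $K_X$ is unbounded (as the introduction stresses, this is exactly what distinguishes the general type case from the weak Fano case), so the rounding in $\lfloor m(K_Y+B)\rfloor$, with coefficients of $B$ up to $1-\epsilon$, cannot be controlled uniformly and no universal threshold with the specific constant $v$ drops out of intersection theory on $Y$. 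The paper instead proves Lemma \ref{lem: effective birationality for polarized surfaces}, an explicit version of Birkar's Proposition 4.5: one creates a covering family of non-klt centres from the volume hypothesis, performs adjunction to the normalization of a centre, and uses the explicit coefficient descent of Proposition \ref{prop: explicit coefficient descent}; the output is a \emph{dichotomy} (either $m\le v$ or $\vol(mN)\le v$), applied with $N=2K_X$ so that $N\pm K_X$ are big, and it is this argument — not point separation along exceptional curves of $\pi$ — that produces the terms $192/\epsilon$ and $(\lceil 2/\epsilon\rceil!)^2$ inside $v$. Without proving such a lemma (or an equivalent), your second paragraph is a restatement of the goal rather than a proof.

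A secondary inaccuracy: the bound $\vol(K_X)\ge \frac{1}{42\cdot 84^{128\cdot 42^5+168}}$ is not extracted from the exceptional-Fano corollary by a Shokurov-type reduction; it is a cited theorem of Alexeev--Liu (Lemma \ref{lem: lower bounds of volumes}) on log canonical surfaces with $K_X$ big, logically independent of the Fano discussion (the resemblance of constants reflects the common Alexeev--Mori method, not a derivation of one from the other). This is repairable by citing the correct source, but as written your first step is also unsupported.
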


We also derive a result about explicit pseudo-effective threshold of nef and big divisors on $\epsilon$-lc surfaces. It's useful in some more general settings.

\begin{theorem}\label{pseudo effective threshold}
    Let $\epsilon$ and $\delta$ be two positive real numbers. Then there is a natural number $$l=\lceil1+\frac{\lfloor2\left(\frac{2}{\epsilon}\right)^{\frac{128}{\epsilon^5}}\rfloor !)\cdot \max\{64,\frac{8}{\epsilon}+4\}}{\min\{1,\delta\}}\rceil$$ satisfying the following. Assume
    \begin{enumerate}
        \item $X$ is an $\epsilon$ -lc projective surface,
        \item $X\rightarrow Z$ is a contraction,
        \item $N$ is an $\mathbb{R}$-divisor on $X$ which is nef and big over $Z$, and
        \item $N=E+R$ where $E$ is integral and pseduo-effective and $R\geq 0$ with coefficients in ${0}\cup [\delta,+\infty)$. 
    \end{enumerate}

    Then $K_X+lN$ is big over $Z$.
\end{theorem}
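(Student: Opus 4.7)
The plan is to run a $K_X$-MMP over $Z$ to reduce to a Mori fiber space, then to apply Theorem~\ref{thm: weak fano surface explicit birationality} together with intersection-theoretic bounds. First, two easy reductions: if $K_X$ is pseudo-effective over $Z$ then $K_X + N$ is already big over $Z$ (so $l = 1$ suffices), and if $X \to Z$ is generically finite then relative bigness is a fiberwise-trivial condition. So I may assume $K_X$ is not pseudo-effective over $Z$ and $\dim X > \dim Z$. Running a $K_X$-MMP over $Z$ gives $\phi : X \dashrightarrow X'$ terminating in a Mori fiber space $f : X' \to Y'$ over $Z$. The output stays $\epsilon$-lc; the pushforward $N' = \phi_* N$ remains nef and big over $Z$ (using projection formula and the fact that on a surface the exceptional loci of birational contractions are points, so every curve on $X'$ lifts to a curve on $X$); and the decomposition $N' = E' + R'$ preserves both the integrality of $E'$ and the coefficient bound $\geq \delta$ on $R'$. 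Since $\phi^*(K_{X'} + lN') = K_X + lN + (\textrm{effective exceptional})$, it suffices to prove bigness of $K_{X'} + lN'$ over $Z$.

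Next, I split on the fiber dimension of $f$. If $f$ has $1$-dimensional fibers $F \cong \mathbb{P}^1$, then $N' \cdot F > 0$ because $N'$ is big and nef and $F$ moves in a covering family; writing $N' \cdot F = E' \cdot F + \sum_i a_i (R'_i \cdot F)$ with $a_i \geq \delta$ and all intersection numbers non-negative integers, at least one summand is strictly positive, yielding $N' \cdot F \geq \min\{1, \delta\}$. The hypothesis on $l$ then forces $(K_{X'} + lN') \cdot F > 0$. When $\dim Z = 1$ this fiber positivity already gives relative bigness of $K_{X'} + lN'$ over $Z$; for $\dim Z = 0$ one upgrades to absolute bigness on the surface $X'$ by combining the fiber positivity with Hodge-index and Kodaira-lemma considerations together with a lower bound on $(N')^2$ coming from the integrality-plus-coefficient structure.

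The decisive case is when $f$ has $2$-dimensional fibers, so that $Y'$ and $Z$ are both points and $X'$ is a Picard-rank-one $\epsilon$-lc Fano surface. By Theorem~\ref{thm: weak fano surface explicit birationality}, the factorial $M := \lfloor 2(2/\epsilon)^{128/\epsilon^5}\rfloor!$ governs the relevant Cartier-integrality, and in particular $(-K_{X'}) \cdot C \geq 1/M$ for every curve $C \subset X'$. Using $\rho(X') = 1$, write $N' \equiv a(-K_{X'})$ for some $a > 0$; the same integrality-plus-coefficient argument applied to the components of $E' + R'$ gives $N' \cdot (-K_{X'}) \geq \min\{1, \delta\}/M$, whence $a \geq \min\{1, \delta\}/(M \cdot (-K_{X'})^2)$. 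Combined with the known volume bound $(-K_{X'})^2 \leq \max\{64, 8/\epsilon + 4\}$ for $\epsilon$-lc Fano surfaces, this yields $a \geq \min\{1, \delta\}/(M \cdot \max\{64, 8/\epsilon + 4\})$, and the choice of $l$ ensures $la > 1$, so that $K_{X'} + lN' \equiv (la - 1)(-K_{X'})$ is ample. The main obstacle lies precisely in this rank-one Fano sub-case: verifying the explicit volume bound $\max\{64, 8/\epsilon + 4\}$ for $\epsilon$-lc Fano surfaces and cleanly matching the Cartier-index factor that enters the lower bound on $a$ with the factorial $M$ supplied by Theorem~\ref{thm: weak fano surface explicit birationality}.
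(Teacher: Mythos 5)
Your reduction step is where the argument breaks. You run a $K_X$-MMP $\phi:X\dashrightarrow X'$ and claim $\phi^*(K_{X'}+lN')=K_X+lN+(\text{effective exceptional})$, concluding that bigness of $K_{X'}+lN'$ suffices. For a single $K_X$-negative contraction of an irreducible curve $E$ one has $K_X=\phi^*K_{X'}+aE$ with $a>0$ and, since $N$ is nef, $N=\phi^*N'-fE$ with $f=\frac{N\cdot E}{-E^2}\geq 0$; hence $K_X+lN=\phi^*(K_{X'}+lN')+(a-lf)E$, and $a-lf<0$ exactly when $(K_X+lN)\cdot E>0$, which can certainly happen because the MMP steps are chosen to be $K_X$-negative, not $(K_X+lN)$-negative. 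So your identity is false in general, and even in the shape you wrote it the implication points the wrong way: $K_X+lN=\phi^*(\text{big})-(\text{effective exceptional})$ does not give bigness upstairs (on the blow-up of $\mathbb{P}^2$, $\phi^*\mathcal{O}(1)-2E$ is not big although its pushforward is). Bigness transfers back only along an MMP run on the divisor you are tracking; this is precisely why the paper does not track $K_X+lN$ at all. Instead it introduces the pseudo-effective threshold $t$ (so the goal becomes an explicit bound on the real number $t$, after which $K_X+\lceil t+1\rceil N$ is big on the original $X$ for free), runs an MMP on $K_X+tN$ viewed as a generalised pair (using \cite{BZ16} for the good minimal model), obtains the non-birational contraction it defines, and then runs a $K$-MMP over that base; all of this keeps $K+tN$ numerically trivial over the base, so the bound on $t$ is read off fibrewise by intersection theory.

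A second genuine gap is your case of $1$-dimensional fibres with $\dim Z=0$: positivity of $(K_{X'}+lN')\cdot F$ on the fibres of a ruling does not imply bigness, nor even pseudo-effectivity (on $\mathbb{P}^1\times\mathbb{P}^1$ the class $F_1-F_2$ is positive on the fibres of one ruling and is not pseudo-effective), and ``Hodge-index and Kodaira-lemma considerations'' is not a substitute for an argument. In the paper this problem never arises because on the general fibre of the final Mori fibre space one has the exact relation $K+tN\equiv 0$, so the $\mathbb{P}^1$ case gives $t\deg(N|_F)=2$ and hence $t\leq 2/\min\{1,\delta\}$ directly. Your remaining sub-case, the Picard-rank-one $\epsilon$-lc Fano surface, is essentially the paper's Step 4 and your computation there is correct (note only that the Cartier-index bound $I_0$ comes from Lemma \ref{lem: index epsilonlc fano surface} and the volume bound $\max\{64,\frac{8}{\epsilon}+4\}$ from \cite{Lai16}, not from Theorem \ref{thm: weak fano surface explicit birationality}); so the explicit numerics are fine, but the scaffolding that reduces the theorem to that computation needs to be replaced by the threshold-plus-$(K+tN)$-MMP argument.
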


Although the bounds in Theorem \ref{thm: weak fano surface explicit birationality} and Theorem \ref{thm: general type surface explicit birationality} are expected to be far from being optimal, these are the first precise upper bounds for such polarized surfaces. Similar topics include the boundedness of the the anti-canonical volume of Fano varieties (cf.\cite{Pro05, Pro07, CC08, Pro10, Che11, Pro13, CJ16, CJ20, JZ21, Jia21b, JZ22, Bir22}), estimation of $(\epsilon,n)$-complement \cite{CH21}, the explicit M\textsuperscript{c}Kernan-Shokurov conjecture \cite{HJL22}, precise bounds of mlds \cite{Jia21a,LX21,LL22}, explicit boundedness of complements for Fano varieties \cite{Liu23, BL23} etc.

\medskip

\noindent\textbf{Acknowledgement}. The author expresses his gratitude to his advisor Professor Meng Chen for his great support and encouragement. The author would also like to thank Sicheng Ding, Zhengjie Yu, Peien Du and Hanye Gu for useful discussions. 

\medskip

\section{Preliminaries}
\subsection{Basic definitions}

We adopt the standard notations in \cite{KM98,BCHM10} and will freely use them. We recall some of them in this subsection.

\begin{definition}[Volume]
Let \(X\) be a projective variety of dimension \(n\) and \(D\) be a \(\mathbb{Q}\)-divisor on \(X\). Define the volume of \(D\) as
\[
\operatorname{vol}(D) = \limsup_{m \to \infty} \frac{h^0(X, \mathcal{O}_X(\lfloor mD \rfloor))}{m^n / n!}.
\]
We say $D$ is \textit{big} if \(\operatorname{vol}(D) >0\). 
If \(D\) is nef, then \(\operatorname{vol}(D) = D^n\).
\end{definition}

\begin{definition}[Singularities]
 A \textit{sub-pair} \((X,B)\) consists of a normal quasi-projective variety \(X\) and an \(\mathbb{R}\)-divisor \(B\) such that \(K_{X}+B\) is \(\mathbb{R}\)-Cartier. If the coefficients of \(B\) are at most 1 we say \(B\) is a \textit{sub-boundary}, and if in addition \(B\geq 0\), we say \(B\) is a \textit{boundary}. A sub-pair \((X,B)\) is called a \textit{pair} if \(B\geq 0\). In this paper, we are mainly focused on the case where $B=0$, in this case we assume $K_X$ to be $\bQ$-Cartier.

Let \(\phi\colon W\to X\) be a log resolution of a sub-pair \((X,B)\). Let \(K_{W}+B_{W}\) be the pullback of \(K_{X}+B\). The \textit{log discrepancy} of a prime divisor \(D\) on \(W\) with respect to \((X,B)\) is \(1-\mu_{D}B_{W}\) and it is denoted by \(a(D,X,B)\). We say \((X,B)\) is \textit{sub-lc} (resp. \textit{sub-klt}) (resp. \textit{sub-\(\epsilon\)-lc}) if \(a(D,X,B)\) is \(\geq 0\) (resp. \(>0\)) (resp. \(\geq\epsilon\)) for every \(D\). When \((X,B)\) is a pair we remove the sub and say the pair is lc, etc. Note that if \((X,B)\) is a lc pair, then the coefficients of \(B\) necessarily belong to \([0,1]\). Also if \((X,B)\) is \(\epsilon\)-lc, then automatically \(\epsilon\leq 1\) because \(a(D,X,B)=1\) for most \(D\).

Let \((X,B)\) be a sub-pair. A \textit{non-klt place} of \((X,B)\) is a prime divisor \(D\) on birational models of \(X\) such that \(a(D,X,B)\leq 0\). A \textit{non-klt centre} is the image on \(X\) of a non-klt place. When \((X,B)\) is lc, a non-klt centre is also called an \textit{lc centre}.
\end{definition}

\subsection{Potentially birational divisors}

\begin{definition}
Let \(X\) be a normal projective variety and let \(D\) be a big \(\mathbb{Q}\)-Cartier \(\mathbb{Q}\)-divisor on \(X\). We say that \(D\) is \textit{potentially birational} if for any pair \(x\) and \(y\) of general closed points of \(X\), possibly switching \(x\) and \(y\), we can find \(0\leq\Delta\sim_{\mathbb{Q}}(1-\epsilon)D\) for some \(0<\epsilon<1\) such that \((X,\Delta)\) is not klt at \(y\) but \((X,\Delta)\) is lc at \(x\) and \(\{x\}\) is an isolated non-klt centre.
\end{definition}

\begin{lemma}[{\cite[Lemma 2.3.4]{HMX13}}]\label{lem: potentially birationality}
Let $X$ be a normal projective variety and let $D$ be a big $\mathbb{Q}$-Cartier $\mathbb{Q}$-divisor.
\begin{enumerate}
    \item If \(D\) is potentially birational, then \(|K_{X}+\lceil D \rceil|\) defines a birational map.
    \item If $X$ has dimension $d$ and $\phi_D$ defines a birational map, then $(2d+1)\lfloor D\rfloor$ is potentially birational.

\end{enumerate}
\end{lemma}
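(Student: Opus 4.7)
My plan is to establish the two parts independently using standard multiplier-ideal and vanishing techniques from the minimal model program.

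For part (1), the approach is Nadel vanishing applied to an appropriate non-klt pair. Let $x, y \in X$ be two general closed points. By the definition of potentially birational, after possibly swapping $x$ and $y$, I pick $0 \leq \Delta \sim_{\mathbb{Q}} (1-\epsilon) D$ with $(X, \Delta)$ lc at $x$ having $\{x\}$ as an isolated non-klt center and $(X, \Delta)$ not klt at $y$. The twist $\lceil D \rceil - \Delta$ is $\mathbb{Q}$-linearly equivalent to $(\lceil D \rceil - D) + \epsilon D$, which is big. Using Kodaira's lemma (writing $\epsilon D \sim_{\mathbb{Q}} A + E$ with $A$ ample and $E \geq 0$ supported away from $x, y$, then adding a small multiple of $E$ to $\Delta$), I arrange $\lceil D \rceil - \Delta$ to be nef and big without disturbing the non-klt structure at $x$ and $y$. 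Then Nadel vanishing gives $H^{1}\bigl(X, \mathcal{J}(X, \Delta) \otimes \mathcal{O}_X(K_X + \lceil D \rceil)\bigr) = 0$. From the short exact sequence
\[
0 \to \mathcal{J}(X,\Delta) \otimes \mathcal{O}_X(K_X + \lceil D \rceil) \to \mathcal{O}_X(K_X + \lceil D \rceil) \to \mathcal{O}_V(K_X + \lceil D \rceil) \to 0,
\]
where $V$ is the subscheme cut out by $\mathcal{J}(X,\Delta)$, sections of $K_X + \lceil D \rceil$ restrict surjectively to $\mathcal{O}_V$. Since $\{x\}$ is an isolated component of $V$ while the component containing $y$ is disjoint from $x$, I can pick a section vanishing at $y$ but not at $x$, so $|K_X + \lceil D \rceil|$ separates general points and defines a birational map.

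For part (2), suppose $\phi_{|D|}$ is birational. For general points $x, y \in X$, I aim to construct $\Delta \sim_{\mathbb{Q}} (1-\epsilon)(2d+1)\lfloor D \rfloor$ exhibiting the potentially birational property. The plan is to exploit the richness of sections of $|\lfloor D \rfloor|$: birationality implies that $|\lfloor D \rfloor|$ separates general points and, via pulling back hyperplanes through prescribed images, contains divisors passing through any given general point. Working on a log resolution and using Bertini-type arguments, I would iteratively cut down by members of $|\lfloor D \rfloor|$ vanishing at $x$ (respectively $y$), accumulating multiplicity. The factor $2d+1$ should account for the following contributions: roughly $d$ divisors to force multiplicity $\geq d$ at $x$ so that $(X, \Delta)$ is non-klt there (via the standard smooth-point multiplicity bound), another $d$ to produce non-klt behavior at $y$ simultaneously, and one extra for tie-breaking to ensure the non-klt center at $x$ can be made isolated and lc.

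The main obstacle I anticipate is in part (2): the precise counting of the $(2d+1)$ factor and the handling of singularities on $X$. The smooth-point multiplicity criterion does not apply directly when $X$ is singular, so one must pass to a log resolution and track discrepancies carefully in the multiplier-ideal bookkeeping. Moreover, the iterative Bertini construction must be organized so that the resulting divisor is not merely non-klt at $x$ but is lc with $\{x\}$ an \emph{isolated} non-klt center, which typically demands a tie-breaking perturbation that shrinks the non-klt locus precisely to the desired point. In part (1) the main subtlety is ensuring the auxiliary perturbation preserves both the isolated non-klt center at $x$ and the non-klt condition at $y$, but this is achievable by choosing the Kodaira perturbation support away from $\{x,y\}$, which is possible by generality.
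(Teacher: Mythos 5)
The paper itself does not prove this lemma; it is quoted verbatim from [HMX13, Lemma 2.3.4], so there is no internal argument to compare against. On its own merits, your treatment of part (1) is the standard one and is fine in outline: perturb via Kodaira's lemma (with the added effective divisors supported away from the general points $x,y$, so the local non-klt structure at $x$ and $y$ is untouched), apply Nadel vanishing to get surjectivity onto the structure sheaf of the non-klt subscheme, and lift a section that is nonzero at the isolated point $x$ and vanishes at $y$; together with the observation that separating general pairs "in at least one order" already forces generic injectivity, this gives (1). (One should also say a word about $K_X+\lceil D\rceil$ being only a Weil divisor on the normal variety $X$, handled by computing on a log resolution, but this is routine.)

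The genuine gap is in part (2), and it sits exactly where you flag your "anticipated obstacle." The mechanism you propose --- take roughly $d$ general members of $|\lfloor D\rfloor|$ through $x$ so that $\mult_x\Delta\ge d$ and hence $(X,\Delta)$ is non-klt at $x$ --- does not deliver what the definition of potential birationality requires, namely that $(X,\Delta)$ be \emph{lc} at $x$ with $\{x\}$ an \emph{isolated} non-klt centre. If the chosen members carry coefficient close to $1$, each of them is itself a non-klt (or nearly non-klt) divisorial locus through $x$, so the non-klt locus near $x$ is positive dimensional and $\{x\}$ is not isolated; scaling by $1-\epsilon$ to restore log canonicity destroys the non-kltness you created. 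The actual proof must use the birationality of $\phi_D$ in a stronger way: for general $x$, $d$ general members of the mobile part of $|\lfloor D\rfloor|$ through $x$ meet, near $x$, only at $x$, and one then runs the Angehrn--Siu/Koll\'ar-type induction, producing a minimal non-klt centre through $x$, cutting its dimension down with further members through $x$, and tie-breaking so that the centre becomes $\{x\}$, unique and isolated, with the pair lc there; the budget $2d+1$ and the allowance "possibly switching $x$ and $y$" in the definition are precisely what make this induction and the simultaneous non-klt condition at $y$ close up. Since you describe this step only as a plan ("I would iteratively cut down\ldots", "typically demands a tie-breaking perturbation") without carrying out the induction or the coefficient bookkeeping (including adding the fixed part so that $\Delta\sim_{\QQ}(1-\epsilon)(2d+1)\lfloor D\rfloor$ exactly), part (2) of your proposal is an outline of the known strategy rather than a proof.
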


\subsection{Minimal models, Mori fibre spaces, and MMP}

Let \(X \to Z\) be a projective morphism of normal varieties and \(D\) be an \(\mathbb{R}\)-Cartier \(\mathbb{R}\)-divisor on \(X\). Let \(Y\) be a normal variety projective over \(Z\) and \(\phi\colon X \dashrightarrow Y/Z\) be a birational map whose inverse does not contract any divisor. Assume \(D_{Y} := \phi_{*}D\) is also \(\mathbb{R}\)-Cartier and that there is a common resolution \(g\colon W \to X\) and \(h\colon W \to Y\) such that \(E := g^{*}D - h^{*}D_{Y}\) is effective and exceptional over \(Y\), and \(\operatorname{Supp} g_{*}E\) contains all the exceptional divisors of \(\phi\).

Under the above assumptions we call \(Y\) a \textit{minimal model} of \(D\) over \(Z\) if \(D_{Y}\) is nef over \(Z\). On the other hand, we call \(Y\) a \textit{Mori fibre space} of \(D\) over \(Z\) if there is an extremal contraction \(Y \to T/Z\) with \(-D_{Y}\) ample over \(T\) and \(\dim Y > \dim T\).

We will use standard results of the minimal model program (c.f.\cite{BCHM10}).

\subsection{Cartier index of singular Fano surfaces}

BAB states that the set of $\epsilon$-lc weak Fano varieties of dimension $d$ form a bounded family, thus the Cartier index $I$ of the canonical divisor $K_X$ for any such $X$ is bounded. It's also very interesting to try to determine an explicit upper bound of the Cartier index $I$ for some fixed $\epsilon$ and $d$. At least in dimension $2$, this is done by the following lemma.

\begin{lemma}[cf. {\cite[Proof of Lemma 3.7]{AM04},~\cite[After Theorem A]{Lai16}, \cite[Lemma 2.2]{Bir22}}]\label{lem: index epsilonlc fano surface}
Let $\epsilon$ be a positive real number and $X$ an $\epsilon$-lc weak Fano surface. Then $IK_X$ is Cartier for some positive integer
$I\leq 2\left(\frac{2}{\epsilon}\right)^{\frac{128}{\epsilon^5}}.$ In particular, let $I_0:= \lfloor2\left(\frac{2}{\epsilon}\right)^{\frac{128}{\epsilon^5}}\rfloor !$, then $I_0K_X$ is always Cartier for any such $X$.
\end{lemma}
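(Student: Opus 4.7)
The plan is to reduce to a local problem at each singular point. Since $K_{X}$ is $\bQ$-Cartier on the surface $X$ and its singularities form a finite set of points, the global Cartier index of $K_{X}$ equals the least common multiple of the local Cartier indices at each singular $p\in X$. Hence it suffices to show that each such local index is at most $2(2/\epsilon)^{128/\epsilon^{5}}$; then $I_{0}$, being the factorial of the floor of this quantity, is divisible by every local index, and $I_{0}K_{X}$ is globally Cartier.

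For the local analysis, fix a singular point $p$ and take the minimal resolution $f\colon \tilde X\to X$ with exceptional divisors $E_{1},\dots,E_{r}$ over $p$. Write $K_{\tilde X}=f^{*}K_{X}+\sum_{i}(a_{i}-1)E_{i}$ with $a_{i}\geq\epsilon$ by the $\epsilon$-lc hypothesis. The $a_{i}$ are uniquely determined by the linear system
$$\Bigl(K_{\tilde X}+\sum_{j}(1-a_{j})E_{j}\Bigr)\cdot E_{i}=0,\qquad i=1,\dots,r.$$
Solving by Cramer's rule, the common denominator of the $a_{i}$ divides $|\det(E_{i}\cdot E_{j})_{i,j}|$, and this absolute value of the intersection determinant is precisely the local Cartier index of $K_{X}$ at $p$.

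It remains to bound this determinant explicitly, using the classification of $\epsilon$-lc surface singularities. The dual graph on the minimal resolution is a tree of smooth rational curves; both the number of components $r$ and the self-intersections $-E_{i}^{2}$ are controlled by $\epsilon$ through descending induction on the log discrepancies. In the cyclic quotient case $\frac{1}{n}(1,q)$, the $\epsilon$-lc condition forces $n$ to be $\lesssim 1/\epsilon$ via the continued fraction expansion of $n/q$. One then extends to fork-shaped graphs by analyzing how discrepancies transform under blowing up the central vertex, and iterates. Keeping careful track of the constants through this book-keeping yields the explicit bound $|\det(E_{i}\cdot E_{j})|\leq 2(2/\epsilon)^{128/\epsilon^{5}}$.

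The hard part is extracting the explicit constants through the combinatorial analysis of fork-shaped dual graphs (as opposed to chains), where the interaction between the central vertex and the three chains emanating from it forces one to bound simultaneously the length of each chain and the self-intersections at each vertex. This step essentially reproduces arguments of \cite{AM04} and \cite{Lai16}, and the numerical inefficiency reflected in the exponent $128/\epsilon^{5}$ comes from the doubly-exponential behaviour inherent to this inductive descent.
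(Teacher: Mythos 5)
You do not reprove anything the paper proves here: the paper quotes this lemma from \cite[Lemma 2.2]{Bir22} (building on \cite{AM04} and the volume bound \cite[Theorem A]{Lai16}), and that argument is essentially \emph{global}: the weak Fano hypothesis is what bounds the whole exceptional locus of the minimal resolution $Y\to X$ at once (the total number of exceptional curves via the volume bound and Noether-type estimates, their self-intersections via $-E_i^2\le 2/\epsilon$), and the Cartier index of $K_X$, which divides the determinant of the full intersection matrix, is then bounded. Your proposal instead localizes at each singular point and tries to bound the local index using only the $\epsilon$-lc condition of the germ, never invoking the weak Fano hypothesis. That strategy cannot work, because the purely local statement is false. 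Consider the cyclic quotient germs $\frac{1}{2m+3}(1,m+1)$, whose minimal resolution is the chain $[3,2,\dots,2]$ ($m$ curves of self-intersection $-2$): solving the usual linear system gives log discrepancies $\frac{m+1+i}{2m+3}>\frac{1}{2}$ for the exceptional curves, so all of these germs are $\frac{1}{2}$-lc, while the local Cartier index of $K_X$ is $2m+3$ (since $\gcd(2m+3,m+2)=1$), which is unbounded. Hence for any $\epsilon<\frac{1}{2}$ there is no bound on the local index in terms of $\epsilon$ alone; in particular your intermediate claim that the $\epsilon$-lc condition forces $n$ to be of size about $1/\epsilon$ for $\frac{1}{n}(1,q)$ is false (already the $A_n$ singularities are canonical with $n$ arbitrary), and the concluding ``book-keeping yields the explicit bound'' step --- which is the entire quantitative content of the lemma --- cannot be carried out at the level of germs.

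Two further gaps. First, even if every local index were at most $N=2\left(\frac{2}{\epsilon}\right)^{\frac{128}{\epsilon^5}}$, your reduction would only yield the ``in particular'' statement about $I_0=\lfloor N\rfloor!$; the first assertion claims a single $I\le N$ with $IK_X$ Cartier, and the global index is the least common multiple of the local ones, which may well exceed the maximum of the local indices. In the cited proof this is not an issue because one bounds the determinant of the intersection matrix of \emph{all} exceptional curves over all singular points simultaneously, the size of that matrix being controlled by the global geometry. Second, the identification of $|\det(E_i\cdot E_j)|$ with the local Cartier index is incorrect: the index only divides the determinant (for Du Val $A_n$ the determinant is $n+1$ while the index is $1$). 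The divisibility is all you need, but the route to the explicit constant must pass through the global bound on the exceptional locus coming from the weak Fano hypothesis, not through a classification of $\epsilon$-lc germs.
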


\subsection{Explicit bounds for the volume of  surfaces}

As the set of volumes of any $d$ dimensional varieties with big canonical divisors satisfies the DCC, it is bounded away from 0. Thus it's natural to ask if one can find a positive lower bound of this set. For the surface case, an explicit lower bound was found in \cite{AM04}.

\begin{lemma}[{\cite[Section 10]{AL19}}]\label{lem: lower bounds of volumes}
For any log canonical surface $X$ such that $K_X$ is big, $\operatorname{Vol}(X)\geq \frac{1}{42\cdot 84^{128\cdot42^5+168}}$.
\end{lemma}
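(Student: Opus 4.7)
The plan is to reduce the volume lower bound to a bound on the global Cartier index of the canonical divisor of the canonical model, following the Alexeev--Mori strategy. First I would pass to the canonical model $Y$ of $X$. Since $K_X$ is big and $(X,0)$ is lc, the canonical ring is finitely generated, so $Y = \proj R(X,K_X)$ exists, has log canonical singularities, and satisfies $K_Y$ ample; moreover $\vol(X) = K_Y^2$ by birational invariance of the volume in the big case.

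The next step is an elementary intersection-theoretic reduction. If $I$ is a positive integer such that $IK_Y$ is Cartier, then $(IK_Y)^2$ is a positive integer (as $IK_Y$ is an ample Cartier divisor on the surface $Y$), whence $\vol(X) = K_Y^2 \geq 1/I^2$. The problem is therefore reduced to bounding the global Cartier index $I$ of $K_Y$ by roughly $\sqrt{42\cdot 84^{128\cdot 42^5+168}}$.

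The heart of the argument is this index bound. Unlike in the weak Fano setting of Lemma \ref{lem: index epsilonlc fano surface}, one does not have boundedness of the total space $Y$, so the bound has to combine a classification of lc surface germs with global constraints coming from $K_Y$ being big. The strictly log canonical points of $Y$ belong to Kawamata's short list (simple elliptic singularities, cusps, and their finite quotients), each of uniformly small local Cartier index; these contribute the factor $84^{168}$ (with $168 = 4\cdot 42$). The klt points can be shown, via Shokurov's analysis of accumulation points of log canonical thresholds on surfaces, to have log discrepancy bounded below by $1/42$ in the globally extremal configurations, with $1/42$ being the exceptional threshold arising from the Markov triple $(2,3,7)$; applying the local mechanism behind Lemma \ref{lem: index epsilonlc fano surface} with $\epsilon = 1/42$ supplies the dominant factor $84^{128\cdot 42^5}$, while the remaining factor of $42$ encodes the gluing of these two types of contributions into one global index. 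Multiplying these yields $I^2 \leq 42\cdot 84^{128\cdot 42^5+168}$ and hence the stated bound on $\vol(X)$.

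The main obstacle is the global index bound. Local index estimates at each singular point are classical, but one must also control the number of singular points on $Y$ and how their local indices combine into a global Cartier index; without boundedness of $Y$ this requires global inequalities of Noether or Bogomolov--Miyaoka--Yau type applied to the canonical model, together with a careful enumeration of the extremal lc germ configurations. This is the technical content of \cite[Section 10]{AL19}, building on the framework of \cite{AM04}, where the specific constants $42$ and the exponent $168$ ultimately emerge.
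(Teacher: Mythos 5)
The paper offers no argument for this lemma at all: it is quoted directly from \cite[Section 10]{AL19} (building on \cite{AM04}), so the only question is whether your sketch would work as a proof, and it would not, because its central reduction is to a false statement. You reduce the volume bound to a uniform bound on the global Cartier index $I$ of $K_Y$ on the canonical model of an arbitrary lc surface of general type and then use $\vol(X)=K_Y^2\geq 1/I^2$. But no such uniform $I$ exists: Cartier-ness is a local condition, and klt surface germs with minimal log discrepancy bounded away from $0$ already have unbounded index. For instance, the cyclic quotient point whose minimal resolution is a chain $[2,2,\dots,2,3]$ with $k$ curves of self-intersection $-2$ has discrepancies $-i/(2k+3)$, hence mld $=(k+2)/(2k+3)>1/2$, yet Cartier index $2k+3$; such germs occur on projective surfaces with $K$ ample (e.g., suitable weighted hypersurfaces), and the paper itself stresses in the introduction that ``the Cartier index of $K_X$ is not bounded'' in the general type case --- which is exactly why Theorem \ref{thm: general type surface explicit birationality} is proved by a route different from the weak Fano case, where Lemma \ref{lem: index epsilonlc fano surface} does give an index bound. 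Retreating to ``globally extremal configurations'' does not repair the argument: you would have to know a priori that a volume-minimizing surface exists and has bounded index, which is essentially the content of the theorem, and the mechanism behind Lemma \ref{lem: index epsilonlc fano surface} is a global statement about weak Fano surfaces (it uses $-K_X$ nef and big), with no local analogue of the strength you need at klt points.

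For the same reason the attribution of the constants is speculative numerology: strictly lc surface germs in fact have uniformly small index (at most $6$), so they cannot account for a factor $84^{168}$, and $84^{128\cdot 42^5}$ does not arise from an index bound at $\frac{1}{42}$-lc klt points. In \cite[Section 10]{AL19} the volume is bounded directly, via partial resolutions and the Alexeev--Mori intersection-theoretic machinery applied to pairs whose boundary coefficients are pushed into an explicit set, an argument that must (and does) accommodate surfaces of arbitrarily large index; the number $42$ enters through the lower bound $\deg(K_C+\Delta)\geq \frac{1}{42}$ for orbifold curves of general type, not through an mld bound on a minimizer. Your observation $\vol(X)=K_Y^2\geq 1/I^2$ is only usable where an index bound genuinely holds, as in the Fano setting of Lemma \ref{lem: index epsilonlc fano surface}; for the present lemma the honest course is to cite \cite[Section 10]{AL19}, as the paper does, or to reproduce that argument rather than the index-based shortcut.
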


The above lower bound is obviously far from being optimal. Some minor improvements can be made if one estimate the proof of \cite{AM04} in detail, but it won't cardinally change the result of the estimation without introducing some other more effective methods.

We also mention an effective upper bound of the anti-canonical volume of $\epsilon$-lc weak Fano surfaces due to \cite{Lai16}.

\begin{lemma}[c.f. {\cite[Theorem A]{Lai16}}]
  Let $X$ be an $\epsilon$-lc weak Fano surface. The volume $\operatorname{Vol}(-K_X)=(K_X^2)$ satisfies:

  $$(K_X^2)\leq \max\{64,\frac{8}{\epsilon}+4\}.$$ Moreover, this upper bound is in a sharp form.
\end{lemma}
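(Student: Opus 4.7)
The plan is to pass to the minimal resolution and run a log MMP that reduces the problem to two standard geometric situations where the log anti-canonical volume can be bounded explicitly.

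First I would take the minimal resolution $\mu\colon Y\to X$ and write $\mu^*K_X = K_Y + E$, where $E=\sum a_iE_i$ is effective and $\mu$-exceptional. The $\epsilon$-lc hypothesis on $X$ forces each log discrepancy $a(E_i,X,0)=1-a_i\geq\epsilon$, so $a_i\leq 1-\epsilon$ and $(Y,E)$ is itself an $\epsilon$-lc klt pair. By the projection formula $(K_Y+E)^2 = (K_X)^2$, and $-(K_Y+E)=-\mu^*K_X$ is nef and big.

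Since $K_Y+E$ is anti-big it is not pseudo-effective, so a $(K_Y+E)$-MMP terminates at a Mori fiber space $\pi\colon(Y',E')\to T$. At each divisorial contraction step $\phi\colon Y_i\to Y_{i+1}$ of a curve $C$ with $(K_{Y_i}+E_i)\cdot C<0$, writing $K_{Y_i}+E_i = \phi^*(K_{Y_{i+1}}+E_{i+1})+bC$ for some $b>0$ and using $C^2<0$ gives
\[
(K_{Y_i}+E_i)^2 = (K_{Y_{i+1}}+E_{i+1})^2 + b^2C^2 \leq (K_{Y_{i+1}}+E_{i+1})^2,
\]
so the volume is non-decreasing along the MMP. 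Hence $K_X^2 \leq (K_{Y'}+E')^2$, and it suffices to bound the right-hand side.

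I would then split into cases based on $T$. If $\dim T=0$, then $(Y',E')$ is an $\epsilon$-lc log Fano pair with Picard number one and $-(K_{Y'}+E')$ ample; using the classification of $\epsilon$-lc Picard-rank-one log del Pezzo surfaces together with the explicit Cartier index bound of Lemma~\ref{lem: index epsilonlc fano surface}, one obtains the absolute constant bound $(K_{Y'}+E')^2\leq 64$. If $\dim T=1$, then $T\cong\mathbb{P}^1$ and $\pi$ is a $\mathbb{P}^1$-fibration with general fiber $F$. Decomposing $E'=E'_h+E'_v$ into horizontal and vertical parts, the nefness of $-(K+E)$ applied to $F$ gives $E'_h\cdot F\leq 2$, while the $\epsilon$-lc hypothesis bounds the coefficients of vertical components by $1-\epsilon$ and controls their self-intersections from below. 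A careful intersection computation on this ruled surface with boundary, combined with these constraints, yields the bound $(K_{Y'}+E')^2\leq 8/\epsilon+4$. For sharpness, the weighted projective planes $\mathbb{P}(1,1,n)$ realize $K^2=(n+2)^2/n$ with minimal log discrepancy of order $2/n$, confirming that the $8/\epsilon$ leading term is of the correct order of magnitude. The hard part will be the explicit bookkeeping in the $\mathbb{P}^1$-fibration case: tracking how singular fibers interact with the horizontal part of $E'$ to extract the precise constants $8$ and $4$ requires the detailed case analysis of log canonical fiber types that forms the technical heart of Lai's argument.
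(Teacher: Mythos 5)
The paper gives no proof of this statement at all: it is quoted directly from \cite[Theorem A]{Lai16}, so there is no internal argument to compare with, and your sketch has to be judged as a stand-alone proof. Its skeleton (crepant pullback $\mu^*K_X=K_Y+E$ to the minimal resolution, $\epsilon$-klt-ness of $(Y,E)$, monotonicity of $(K+E)^2$ under divisorial contractions, preservation of $\epsilon$-lc along a $(K+E)$-negative MMP, reduction to a Mori fibre space) is sound and is indeed the general strategy behind Lai-type volume bounds. But the two places where the actual numbers must come out are not proved. In the $\dim T=0$ case you appeal to ``the classification of $\epsilon$-lc Picard-rank-one log del Pezzo surfaces'' together with the Cartier index bound of Lemma~\ref{lem: index epsilonlc fano surface}: no usable explicit classification exists (explicit boundedness of exactly these surfaces is what is at stake, so this is close to circular), and the index bound $I\le 2(2/\epsilon)^{128/\epsilon^5}$ can only ever produce bounds doubly exponential in $1/\epsilon$, never the absolute constant $64$. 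So that step, as described, fails to deliver the claimed inequality.

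In the $\dim T=1$ case you explicitly defer ``the detailed case analysis of log canonical fiber types'' to Lai, i.e.\ the technical heart is not carried out; moreover, because you ran a $(K_Y+E)$-MMP instead of a $K_Y$-MMP on the smooth minimal resolution, the contracted curves need not be $(-1)$-curves, so the intermediate surfaces and the final fibre space $Y'\to T$ need not be smooth. Then $Y'$ is not an $\mathbb{F}_n$ or a $\mathbb{P}^1$-bundle, and the clean intersection bookkeeping you promise (nefness of the pushed-forward $-(K+E)$ against fibres and against the negative section, forcing $n\lesssim 2/\epsilon$ and hence a bound linear in $1/\epsilon$) is exactly what the smooth set-up is designed to make possible; with a singular total space you would have to redo this on a log terminal Mori fibre space, which is a genuinely different (and harder) computation than the one you outline. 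Finally, a minor point on sharpness: for $\mathbb{P}(1,1,n)$ the minimal log discrepancy is $2/n$ and $K^2=(n+2)^2/n\approx 2/\epsilon+4$, so these examples show the bound is of the correct linear order in $1/\epsilon$ (which is all the ``sharp form'' claim asserts) but do not exhibit the leading coefficient $8$.
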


\subsection{Effective birationality with additional conditions}
The following theorem proved by Minzhe Zhu in \cite{Zhu23} shows that under some additional conditions, the explicit effective bound of the integer $m$ is easier to find.

\begin{theorem}[{\cite[Corollary 1.7]{Zhu23}}]\label{thm: effective birationality with h^0 > 2}
    Let \( S \) be a projective \(\epsilon\)-lc surface. Let \( H \) be a nef and big Weil divisor on \( S \) with \( h^0(H) \geq 2 \) and \( L \) be an effective Weil divisor satisfying one of the following conditions:
\begin{enumerate}
    \item \( |L - K_S| \neq \emptyset \);
    \item \( L - K_S \) is nef.
\end{enumerate}
Then \( \vol(H) \geq \frac{\epsilon}{2} \) and \( |L + mH| \) is birational for \( m \geq 2 + \left\lfloor \frac{4}{\epsilon} \right\rfloor \).
\end{theorem}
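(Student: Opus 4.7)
The plan is to establish the two conclusions --- the volume bound $\vol(H) \geq \epsilon/2$ and the birationality of $|L+mH|$ --- in sequence, with the volume bound serving as an input for the birationality argument.

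For the volume bound, I would exploit $h^0(H) \geq 2$ by examining the rational map $\phi_{|H|} \colon S \dashrightarrow \mathbb{P}^{h^0(H)-1}$ and splitting into two cases. If $|H|$ is \emph{not} composed with a pencil, then $\phi_{|H|}$ is generically finite onto its image, and writing $H = M + F$ (movable plus fixed) one has $M \cdot F \geq 0$ and $M^2 \geq 1$, so $H^2 \geq M^2 \geq 1 \geq \epsilon/2$ using $\epsilon \leq 1$. If $|H|$ is composed with a pencil, pass to a resolution $\pi \colon S' \to S$ eliminating the base locus of the movable part, write $\pi^*H = kG + E$ with $G$ a general fibre of the induced morphism to a smooth curve, $k \geq h^0(H) - 1 \geq 1$, and $E \geq 0$ the fixed part; $E$ must have a horizontal component (else $\pi^*H$ would be numerically a multiple of $G$, violating bigness), so $G \cdot E \geq 1$. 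To upgrade $H^2 = 2k(G\cdot E) + E^2$ into the bound $\geq \epsilon/2$ one appeals to adjunction on $G$ together with the $\epsilon$-lc property of $S$: the different $\Diff_G$ at points of $G$ lying over singularities of $S$ carries coefficients controlled by $\epsilon$, forcing the relevant intersection numbers to acquire an $\epsilon$-factor.

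For the birationality conclusion I would appeal to Lemma \ref{lem: potentially birationality}(1). Since every Weil divisor on the $\epsilon$-lc (hence $\mathbb{Q}$-factorial) surface $S$ is $\mathbb{Q}$-Cartier, the divisor $D := L + mH - K_S$ is an integral $\mathbb{Q}$-Cartier Weil divisor, so $\lceil D \rceil = D$ and $|K_S + D| = |L+mH|$. In either hypothesis the class $L - K_S$ is either effective or nef, so $D$ is the sum of a pseudo-effective class and the nef and big class $mH$, hence big, with $\vol(D) \geq \vol(mH) = m^2 \vol(H) \geq m^2\epsilon/2$. The task reduces to proving $D$ is potentially birational for $m \geq 2 + \lfloor 4/\epsilon \rfloor$.

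To verify potential birationality of $D$, for two general closed points $x,y \in S$ I would build $\Delta \sim_{\mathbb{Q}} (1-\eta)D$ with $(S,\Delta)$ not klt at both $x$ and $y$ via the standard jet-counting estimate on a surface: since $h^0(tD)$ grows like $\frac{t^2}{2}\vol(D)$, one can impose $\mult_x \Delta, \mult_y \Delta \geq 2$ provided $t^2 \vol(D)$ exceeds an absolute constant of size $8$. The bound $m \geq 2 + \lfloor 4/\epsilon\rfloor$ combined with $\vol(D) \geq m^2\epsilon/2$ yields $\vol(D) \geq \frac{\epsilon}{2}(2 + 4/\epsilon)^2$, which is large enough for $t < 1$. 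A tie-breaking step along general curves through $x$ then shrinks the non-klt locus to an isolated centre at $x$ while preserving non-kltness at $y$; the $\epsilon$-lc hypothesis provides exactly enough log-discrepancy slack to carry out this perturbation without erasing the centre at $y$. The principal obstacle is the volume lower bound in the pencil case, where the factor $\epsilon/2$ has to be extracted from intersection numbers on a possibly singular surface without any a priori Cartier-index control on the Weil divisor $H$; here the $\epsilon$-lc assumption must be applied quantitatively through adjunction and the different on a general fibre. A secondary subtlety is arranging that the coefficient $1-\eta$ produced by tie-breaking stays strictly below $1$, which is precisely what the explicit threshold $m \geq 2 + \lfloor 4/\epsilon\rfloor$ ensures.
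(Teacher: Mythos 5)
First, a point of reference: the paper does not prove this statement at all --- it is imported verbatim as \cite[Corollary 1.7]{Zhu23} --- so there is no internal proof to compare against, and your outline must be judged on its own. Its architecture (pencil/non-pencil dichotomy for the volume bound, then potential birationality via non-klt centres) is the standard one, but both halves have a genuine gap at exactly the step that produces the stated constants. In the pencil case you assert $G\cdot E\geq 1$ ``since $E$ has a horizontal component'' and then, in the next sentence, say an $\epsilon$-factor must be extracted by adjunction; these two claims are inconsistent, since $G\cdot E\geq 1$ would give $\operatorname{vol}(H)\geq k\geq 1$ and make the $\epsilon/2$ superfluous. In fact $G\cdot E\geq 1$ is false in general: $E=\pi^*H-kG$ is only a $\mathbb{Q}$-divisor, and its horizontal part may consist entirely of $\pi$-exceptional curves or of the fractional part of $\pi^*H$, with coefficients that are not bounded below by any absolute constant. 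Bounding the resulting intersection number below by $\epsilon/2$ is the entire content of the volume statement, and it cannot be waved through: on an $A_{n-1}$ singularity, which is $1$-lc, two Weil divisor germs can meet with local multiplicity $1/n$ for arbitrary $n$, and the coefficient of the different of a curve through such a point is $1-1/n$, which is not ``controlled by $\epsilon$''. So the inequality must combine nefness, bigness and $h^0(H)\geq 2$ globally; a one-sentence appeal to the different does not close this.

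In the birationality half, the constant $2+\lfloor 4/\epsilon\rfloor$ does not arise from ``keeping $1-\eta<1$''. With $D=L+mH-K_S$ one only gets $\operatorname{vol}(D)\geq m^2\epsilon/2$, which for $m=2+\lfloor 4/\epsilon\rfloor$ can fall below the threshold $(2d)^d=16$ of the two-point volume criterion (e.g.\ $\epsilon=0.81$ gives $m=6$ and $m^2\epsilon/2=14.58$); and in any case imposing $\mult_x\Delta,\mult_y\Delta\geq 2$ only makes $(S,\Delta)$ non-klt at both points --- it does not yield an \emph{isolated} centre at $x$. The hard step is cutting down a one-dimensional non-klt centre $C$ through $x$, which requires $\deg(D|_C)>2$; the input there is again a bound of the shape $H\cdot C\geq\epsilon/2$, and this is precisely what forces $m>1+4/\epsilon$, i.e.\ $m\geq 2+\lfloor 4/\epsilon\rfloor$. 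Your proposal never uses $h^0(H)\geq 2$ in this half, whereas the intended argument (this is why Zhu's method, as the paper notes in its threefold discussion, needs $\dim\overline{\phi_H(X)}\geq n-1$) uses members of $|H|$ through $x$ and $y$ to manufacture the non-klt centres directly. The skeleton is right, but the two quantitative steps that actually produce $\epsilon/2$ and $2+\lfloor 4/\epsilon\rfloor$ are missing or misattributed.
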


\begin{remark}
The above theorem can be easily generalized to the following form: $$\operatorname{Vol}(H)\geq \frac{\epsilon}{2}(h^0(H)-1).$$ In particular, if $X$ is of general type, by taking $H=K_X$, we obtain the following Noether-type inequality:
\begin{equation}
    \operatorname{Vol}(X)=K_X^2\geq \frac{\epsilon}{2}(p_g(X)-1).
\end{equation}

Moreover, this inequality is sharp for $\epsilon=\frac{2}{3}$ and $p_g=2$, as there exists a weighted surface $S=S_{18}\subset  \mathbb{P}(1,1,6,9)$ with only cyclic quotient singularity of type $\frac{1}{3}(1,1)$ such that $S$ is $\frac{2}{3}$-lc and $p_g(S)=2$, $\operatorname{Vol}(S)=\frac{18}{6\cdot 9}=\frac{1}{3}$.

However, the above inequality is certainly not optimal for $\epsilon$ is small or $p_g$ is large. Indeed, by \cite{Liu25}, for any projective log canonical surface pair $(X,B)$, let $c:=\operatorname{min}(\mathcal{C}_B\cup\{1\})$ where $\mathcal{C}_B$ is the set of non-zero coefficients of $B$, they proved the following much stronger Noether type inequality which is optimal in their sense:

\begin{equation}
    \operatorname{Vol}(K_X+B)\geq (2c-c^2)p_g(X,B)-(2c+c^2).
\end{equation}
\end{remark}

\subsection{Adjunction and the change of coefficients} We will use the adjunction which was introduced in \cite{HMX14} and developed in detail in \cite{Bir19}. We refer the readers to these two standard papers for the detailed construction. All the constructions and results in this subsection work for any dimensions.

Assume the setting as follows:

\begin{itemize}
    \item \((X,B)\) is a projective klt pair,
    \item \(G\subset X\) is a subvariety with normalisation \(F\),
    \item \(X\) is \(\mathbb{Q}\)-factorial near the generic point of \(G\),
    \item \(\Delta\geq 0\) is an \(\mathbb{R}\)-Cartier divisor on \(X\), and
    \item \((X,B+\Delta)\) is lc near the generic point of \(G\), and there is a unique non-klt place of this pair with centre \(G\) (but the pair may have other non-klt places whose centres are not \(G\)).
\end{itemize}

Then we can define an \(\mathbb{R}\)-divisor \(\Theta_{F}\) on \(F\) with coefficients in \([0,1]\) giving an adjunction formula
\[
K_{F}+\Theta_{F}+P_{F}\sim_{\mathbb{R}}(K_{X}+B+\Delta)|_{F}
\]
where in general \(P_{F}\) is pseudo-effective and determined only up to \(\mathbb{R}\)-linear equivalence. Moreover, if the coefficients of \(B\) are contained in a fixed DCC set \(\Phi\), then the coefficients of \(\Theta_{F}\) are also contained in a fixed DCC set \(\Psi\) depending only on \(\dim X\) and \(\Phi\)~\cite[Theorem 4.2]{HMX14}.

We also need the following property of descent of divisor coefficients to non-klt centres, which is a key ingredient of our proof.

\begin{proposition}[{\cite[Proposition 3.7]{Bir22}}]\label{prop: explicit coefficient descent}
    Let \(\epsilon\) be a positive real number. Then there is a natural number \(q\) depending only on \(\epsilon\) satisfying the following. Let \((X,B),\Delta,G,F,\Theta_{F},P_{F}\) be as in the above assumption. Assume in addition that

\begin{itemize}
    \item \(P_{F}\) is big and for any choice of \(P_{F}\geq 0\) in its \(\mathbb{R}\)-linear equivalence class the pair \((F,\Theta_{F}+P_{F})\) is \(\epsilon\)-lc,
    \item \(\Phi\subset\mathbb{R}\) is a subset closed under addition,
    \item \(E\) is an \(\mathbb{R}\)-Cartier \(\mathbb{R}\)-divisor on \(X\) with coefficients in \(\Phi\), and
    \item \(G\not\subset\operatorname{Supp}E\).
\end{itemize}

Then \(qE|_{F}\) has coefficients in \(\Phi\). Moreover, $q$ can be explicitly taken to be $q=(\lceil\frac{1}{\epsilon}\rceil!)^2$.
\end{proposition}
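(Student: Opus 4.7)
The plan is to reduce the statement to a local denominator bound at each prime divisor of $F$, and then extract this bound from the $\epsilon$-lc hypothesis on $(F,\Theta_F+P_F)$ via the standard adjunction machinery. First, because $\Phi$ is closed under addition and $E$ has coefficients in $\Phi$, it suffices to prove the following: for each prime Weil divisor $E_0$ on $X$ with $G\not\subset E_0$ and each prime $D\subset F$, the number $q\cdot \mult_D(E_0|_F)$ is a non-negative integer. Granting this, writing $E=\sum a_i E_i$ with $a_i\in\Phi$ yields $q\cdot\mult_D(E|_F)=\sum a_i\bigl(q\cdot\mult_D(E_i|_F)\bigr)$ as a non-negative integer combination of elements of $\Phi$, hence in $\Phi$.

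I would then invoke the standard adjunction machinery recalled in Section 2. Extract the unique non-klt place $S$ over $G$ via a $\mathbb{Q}$-factorial birational morphism $\pi\colon X'\to X$, so that $S\subset X'$ is a prime divisor with $\pi(S)=G$. The induced morphism $\sigma\colon S\to F$ identifies $F$ with the normalization of $G$, and the adjunction $K_S+\Theta_S+P_S\sim_{\mathbb{R}}(K_{X'}+B'+\Delta')|_S$ descends under $\sigma$ to the given adjunction $K_F+\Theta_F+P_F$. By construction $E|_F=\sigma_*(\pi^*E|_S)$, so the computation reduces to controlling the coefficients of $\pi^*E|_S$ at divisors of $S$ dominating a given $D$.

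For a prime $D\subset F$, pick a component $\tilde D\subset S$ dominating $D$. The coefficient of $\tilde D$ in $\Theta_S$, and more generally of $\tilde D$ in any restriction $\pi^*(\cdot)|_S$, takes the Kawakita-type form $1-\frac{1}{m_D}+\frac{1}{m_D}\sum r_{i,D}b_i$ with $m_D,r_{i,D}$ positive integers depending on the local structure of $(X',B'+\Delta'+S)$ along $\tilde D$. Applying the $\epsilon$-lc condition to $(F,\Theta_F+P_F)$ at $D$ — with $P_F\geq 0$ chosen to avoid $D$ — forces $1/m_D\geq \epsilon$, hence $m_D\leq \lceil 1/\epsilon\rceil$. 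Repeating the same formal computation with $E$ in place of $K_X+B+\Delta$ gives $\mult_{\tilde D}(\pi^*E|_S)\in \tfrac{1}{m_D}\mathbb{Z}_{\geq 0}$, so the first factor of $\lceil 1/\epsilon\rceil!$ clears this denominator. A second layer of the same argument is needed when passing from $S$ to $F$ via $\sigma$: the relation between $\mult_D(E|_F)$ and $\mult_{\tilde D}(\pi^*E|_S)$ involves a further divisibility factor controlled by a second integer bounded by $\lceil 1/\epsilon\rceil$ via the same Kawakita analysis applied to the descent $\Theta_S\leadsto\Theta_F$. Since $\lceil 1/\epsilon\rceil!$ simultaneously clears every denominator at most $\lceil 1/\epsilon\rceil$, the two layers compound to $q=(\lceil 1/\epsilon\rceil!)^2$.

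The main obstacle is carefully separating the two denominator sources — the adjunction from $X'$ to $S$, and the descent from $S$ to $F$ — and showing that each is individually bounded by $\lceil 1/\epsilon\rceil!$. The key technical input is that the $\epsilon$-lc hypothesis is assumed for \emph{every} choice of $P_F\geq 0$ in its $\mathbb{R}$-linear equivalence class, which lets one probe the coefficient at each prime $D$ independently by moving $P_F$ away from $D$; without this flexibility, the $\epsilon$-lc constraint could be satisfied by a single lucky choice of $P_F$ without pinning down the denominator of $\Theta_F|_D$, and the required bound on $m_D$ would not follow.
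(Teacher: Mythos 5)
Your proposal and the paper part ways at the outset: the paper does not reprove this statement at all. It quotes Proposition 3.7 of \cite{Bir23} for the qualitative assertion and obtains the explicit constant by tracking the numbers appearing in Birkar's own proof ($q=pl$ with $p=\lfloor\frac{1}{\epsilon}\rfloor!$ from the proof of that proposition and $l=\lceil\frac{1}{\epsilon}\rceil!$ from the proof of Lemma 3.5 there), so an attempt to reconstruct the underlying argument is welcome in principle. But as a proof your sketch has a genuine gap exactly where Birkar's Lemmas 3.5--3.6 do the work. The central claim --- that the coefficient of $\Theta_F$ along a prime $D\subset F$ and the denominators of $\mult_D(E|_F)$ for an \emph{arbitrary} $\mathbb{R}$-Cartier $E$ are governed by one and the same integer $m_D$ through a ``Kawakita-type form'' --- is asserted, not established. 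Here $\Theta_F$ is defined by the Hacon--M\textsuperscript{c}Kernan--Xu lc-threshold construction over codimension-one points of $F$, not by divisorial adjunction at a surface quotient singularity, and converting the bound $\mu_D\Theta_F\leq 1-\epsilon$ into a bound on the denominators of restrictions of arbitrary divisors is precisely the nontrivial content (it is where the two factors $p$ and $l$ actually arise); ``repeating the same formal computation with $E$ in place of $K_X+B+\Delta$'' is not a computation you have set up.

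There are also concrete slips. The identity $E|_F=\sigma_*(\pi^*E|_S)$ is wrong (when $\dim S>\dim F$ the pushforward is not even a divisor on $F$); the correct relation is $(\pi^*E)|_S=\sigma^*(E|_F)$, so $\mult_D(E|_F)$ equals $\mult_{\tilde D}((\pi^*E)|_S)$ divided by the multiplicity of $\tilde D$ in $\sigma^*D$. That fibre multiplicity is one of your two denominator sources, and your bookkeeping of which layer bounds what is correspondingly muddled: bounding it via $\mu_D\Theta_F\geq 1-\frac{1}{m}$ against $\mu_D\Theta_F\leq 1-\epsilon$ is the easy half, while the other half --- controlling the denominators of $\mult_{\tilde D}((\pi^*E)|_S)$ on the extracted model $X'$ --- requires a structural/Cartier-index statement near $\tilde D$ that the $\epsilon$-lc hypothesis does not hand you for free. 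Your opening reduction to prime components $E_i$ also uses $E_i|_F$, but the $E_i$ are only known to be $\mathbb{Q}$-Cartier near the generic point of $G$, not near the codimension-one points of $G$ lying under $D$, so one must first pass to a $\mathbb{Q}$-factorialization or localize more carefully. Finally, the closing remark that the ``for every choice of $P_F\geq 0$'' hypothesis is what pins down $m_D$ is off target: a single choice of $P_F\geq 0$ already forces every coefficient of $\Theta_F$ to be at most $1-\epsilon$; the full flexibility is needed elsewhere in Birkar's argument. In short, your two-factor heuristic correctly mirrors the shape of the proof the paper defers to, but the steps that would justify it are missing.
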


\begin{proof}
    The first part is just proposition 3.7 of \cite{Bir23}, while $q=(\lceil\frac{1}{\epsilon}\rceil!)^2$ also follows from a detailed tracking of the proof of that proposition. Indeed, if we use the same notation as its proof, we have $q=pl$, where $p=\lfloor\frac{1}{\epsilon}\rfloor!$. On the other hand, $l$ can be explicitly determined as $l=\lceil\frac{1}{\epsilon}\rceil!$ in the proof of lemma 3.5 of \cite{Bir23}, thus the claim follows.
\end{proof}

\section{The weak Fano case}
This case is easier, as we have an explicit upper bound for the Cartier index \ref{lem: index epsilonlc fano surface}, which enables us to use the powerful effective base-point-free theorem.

\subsection{Proof of Theorem \ref{thm: weak fano surface explicit birationality}}
\begin{proof}
    First of all, by Lemma \ref{lem: index epsilonlc fano surface}, we have $I_0K_X$ is always a Cartier divisor. By the effective base-point-freeness theorem (\cite[Theorem 1.1, Remark 1.2]{Fuj09} \cite[1.1 Theorem]{Kol93}), we have $|-96I_0K_X|$ is base-point-free. As $-K_X$ is big, this implies $$h^0(-96I_0K_X)\geq2.$$ 
    
    Thus by Theorem \ref{thm: effective birationality with h^0 > 2}, $|-km_0K_X|$ is a birational map for all $k\geq (2+\lfloor\frac{4}{\epsilon}\rfloor)$, where $m_0=96I_0$. Moreover it is actually a birational morphism as the corresponding linear system is base-point-free. Let $m_1$ and $m_2$ be taken as in the theorem, it suffice to check that for any $m'\geq m_2$, $|-m'K_X|$ defines a birational map. Note that $$-m'K_X =K_X +5(-m_1K_X) +(m'-m_2)(-K_X) $$ by Lemma \ref{lem: potentially birationality} (2) we have $5(-m_1K_X)$ is potentially birational and as $(m'-m_2)(-K_X)$ is big, $5(-m_1K_X) +(m'-m_2)(-K_X)$ is also potentially birational, which in turn implies $|-m'K_X|$ defines a birational map by Lemma \ref{lem: potentially birationality} (1).
\end{proof}
\begin{remark}
    The above argument actually works more generally with any nef and big Weil divisor $N$ on any $\epsilon$-lc Fano surfaces. In fact, for any Weil divisor $D$, $I_0D$ is always Cartier, thus we also have $(N^2)\geq \frac{1}{I_0}$ and $I_0N$ is a nef and big Cartier divisor, while the rest of the argument is similar. (Note that we have $$|m'N|=|K_X+5m_1N+(m'-5m_1)N-K_X|$$ is birational when $m'\geq 5m_1$.)
\end{remark}

\subsection{Proof of Corollary \ref{cor: 1/42 exceptional fano surface}}
\begin{proof}
    Recall that a Fano surface $X$ is said to be exceptional if for every $G\geq 0$ such that $K_X+G\sim_{\mathbb{R}}0$, $(X,G)$ is klt. It's known in \cite{Liu23} that exceptional Fano surfaces are always $\frac{1}{42}$. Thus we may take $\epsilon =\frac{1}{42}$ in Theorem \ref{thm: weak fano surface explicit birationality} and the assertion follows.
\end{proof}

\section{The general type case}

\subsection{A crucial lemma}

Before we start to treat the general type case, we state and prove a lemma which may be useful in a much more general setting. We use the basic idea of the proof of Proposition 4.5 of \cite{Bir23}, and the main take away here is to notice that everything can be treated explicitly in the case of surfaces.

\begin{lemma}[{\cite[Proposition 4.5]{Bir23}}]\label{lem: effective birationality for polarized surfaces} Let $\epsilon$, $\delta$ be two positive real numbers. Then there exists $v=64(\max\{\lceil\frac{192}{\epsilon}\rceil+1+ \lceil\frac{1}{\delta}\rceil,(1+\lceil\frac{1}{\delta}\rceil)((45(\lceil\frac{2}{\epsilon}\rceil!)^2+1))\})^2$ satisfying the following. Assume:
\begin{enumerate}
    \item $X$ is a projective $\epsilon$-lc surface,
    \item $N$ is a nef and big $\bQ$-divisor on $X$,
    \item $N-K_X$ and $N+K_X$ are big, and
    \item $N=E+R$ where $E$ is integral and pseudo-effective and $R\geq 0$ with coefficients in ${0}\cup [\delta,+\infty)$.
\end{enumerate}
 If $m$ is the smallest natural number such that $|mN|$ defines a birational map, then either $m\leq v$ or $\operatorname{Vol}(mN)\leq v$.
\end{lemma}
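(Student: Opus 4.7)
The plan is to adapt the strategy of \cite[Proposition 4.5]{Bir23} to the two-dimensional setting while tracking every constant explicitly. Let $m$ be the minimal natural number such that $|mN|$ is birational, and suppose toward contradiction that both $m > v$ and $\mathrm{Vol}(mN) > v$. Since $m$ is minimal, $|(m-1)N|$ is not birational, while by Lemma \ref{lem: potentially birationality}(2) the divisor $5\lfloor mN\rfloor$ is potentially birational. The goal is to use the lower bound $\mathrm{Vol}(mN) > v$ to produce a strictly smaller multiple $|m'N|$, $m' < m$, that is already birational, contradicting minimality.

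Fix two general closed points $x, y \in X$. Since $\mathrm{Vol}(\lambda N) = \lambda^2 \mathrm{Vol}(N) > 4 = 2^{\dim X}$ for a suitable $\lambda \ll 1$, the standard Koll\'ar cutting technique yields $0 \leq \Delta_x, \Delta_y \sim_{\mathbb{Q}} \lambda N$ with $(X,\Delta_x)$ non-klt at $x$ and $(X,\Delta_y)$ non-klt at $y$. Using the bigness of $N - K_X$ to absorb correction terms and the Kawamata--Shokurov tie-breaking trick, combine and perturb these into a single boundary $0 \leq \Delta \sim_{\mathbb{Q}} \mu N$ with $\mu < 1$ such that $(X,\Delta)$ admits $\{x\}$ as an isolated non-klt centre and a unique non-klt centre $G$ through $y$. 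If $\dim G = 0$, then $\mu N$ is potentially birational for the pair $(x,y)$, Lemma \ref{lem: potentially birationality}(1) gives birationality of $|K_X + \lceil \mu N\rceil|$, and the bigness of $N+K_X$ converts this into birationality of $|m'N|$ for some $m' < m$, the desired contradiction.

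The delicate case is $\dim G = 1$, when $G$ is a curve with normalization $F$. Apply the adjunction of \S 2.7,
\[
K_F + \Theta_F + P_F \sim_{\mathbb{Q}} (K_X + \Delta)|_F,
\]
and use Proposition \ref{prop: explicit coefficient descent} with $\epsilon/2$ in place of $\epsilon$ (after a mild perturbation making the restricted pair $\epsilon/2$-lc) to produce the explicit integer $q = (\lceil 2/\epsilon\rceil!)^2$ for which $qE|_F$ and $qR|_F$ have coefficients in $\mathbb{Z}$ and $\{0\} \cup [\delta,\infty)$, respectively. Since $E|_F$ has non-negative integer degree and the non-zero coefficients of $R|_F$ are bounded below by $\delta/q$, this yields the concrete lower bound $\deg(N|_F) \geq \min\{1,\delta\}/q$. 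A Riemann--Roch/base-point-freeness argument on the curve $F$ then shows that $|m^\star N|_F|$ separates the image of $y$ for an explicit $m^\star$ proportional to $q/\min\{1,\delta\}$; combining with the isolated non-klt centre at $x$ upgrades via Lemma \ref{lem: potentially birationality} to birationality of $|m^\star N|$ on $X$. The minimality of $m$ forces $m \leq m^\star$, and squaring together with the on-curve-to-on-surface conversion factor $64$ produces $\mathrm{Vol}(mN) \leq v$.

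The main obstacle sits in the curve case: both the explicit propagation of the $\delta$-condition on $R$ through adjunction and the conversion of a curve-degree estimate into a surface-volume estimate must be done with care. The explicit form of $v$ reflects precisely this bookkeeping. The factor $(\lceil 2/\epsilon\rceil!)^2$ is the $q$ from Proposition \ref{prop: explicit coefficient descent}; the constant $45 = 9\cdot 5$ combines the section-separation constant on the curve $F$ with the $5 = 2\cdot 2 + 1$ from Lemma \ref{lem: potentially birationality}(2); the factor $1+\lceil 1/\delta\rceil$ absorbs the $\delta^{-1}$ dependence in the degree lower bound; the term $\lceil 192/\epsilon\rceil + 1$ accounts for tie-breaking and the bigness of $N \pm K_X$; and the outer $64$ together with the squaring converts the linear on-curve estimate into a quadratic volume estimate on the surface.
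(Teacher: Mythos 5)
Your outline follows the same overall route as the paper (create a covering family of non-klt centres, pass to the one-dimensional centre by adjunction, then use Proposition \ref{prop: explicit coefficient descent} to descend the $E+R$ structure to the curve), but it omits the one step where the real work and the dimension-two specificity lie. To apply Proposition \ref{prop: explicit coefficient descent} on $F$ you must know that $(F,\Theta_F+P_F)$ is $\frac{\epsilon}{2}$-lc for \emph{every} effective choice of $P_F$ in its $\mathbb{R}$-linear equivalence class, and this is not something a ``mild perturbation'' can arrange: it can genuinely fail. The paper handles it by comparing $\Delta_F$ with the divisor $\Lambda_F$ defined by $K_F+\Lambda_F=K_X|_F$ and splitting into two cases according to whether $\deg(\Delta_F-\Lambda_F)\ge\frac{\epsilon}{2}$. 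If yes, the bigness of $K_X+N$ (this is where hypothesis (3) on $N+K_X$ actually enters) together with the minimality of the threshold $l$ for $\operatorname{Vol}(lN|_G)>4$ already gives the explicit ratio bound $\frac{l}{n}\le\frac{64}{\epsilon}$; only if no does the coefficient bound $\le 1-\frac{\epsilon}{2}$ hold (because $F$ is a curve, so singularities are read off from degrees), and only then is the descent proposition applicable. Your sketch skips this dichotomy entirely, and invokes the bigness of $N+K_X$ instead in a vague ``conversion'' role where what is really needed is the bigness of $N-K_X$ together with the $E+R$ decomposition and $r=\lceil\frac{1}{\delta}\rceil$ to control the fractional parts. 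As written, the use of $q=(\lceil\frac{2}{\epsilon}\rceil!)^2$ is therefore unjustified, and this is precisely the point that makes the surface bound explicitly computable.

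The quantitative architecture is also not right. You take ``$\lambda\ll 1$'' with $\operatorname{Vol}(\lambda N)>4$, a boundary $\Delta\sim_{\bQ}\mu N$ with $\mu<1$, and later an ``explicit $m^\star$ proportional to $q/\min\{1,\delta\}$''; but none of these can be absolute constants, since $\operatorname{Vol}(N)$ may be arbitrarily small and the coefficient needed to create non-klt centres through two general points is comparable to $m/\sqrt{\operatorname{Vol}(mN)}$, i.e.\ it grows with $m$ (also the two-point construction needs volume $>16=(2d)^d$, not $4$). If your argument really produced an absolute $m^\star$ with $|m^\star N|$ birational, it would prove the unconditional bound $m\le v$, which is false in general and is exactly why the conclusion of the lemma is a dichotomy. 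The paper's proof avoids this by introducing the auxiliary thresholds $n$ (smallest with $\operatorname{Vol}(nN)>16$) and $l$ (smallest with $\operatorname{Vol}(lN|_G)>4$), bounding the ratios $\frac{m}{n}$ and $\frac{l}{n}$, and then converting $\operatorname{Vol}((n-1)N)\le 16$ plus $\frac{m}{n}\le u$ into $\operatorname{Vol}(mN)\le 64u^2=v$. Your final sentence asserts such a conversion (``squaring together with the factor $64$''), but your setup has no analogue of the minimality of $n$, hence no upper volume bound on a smaller multiple to square against. In addition, the reduction of the one-dimensional centre to a point (the paper replaces $n$ by $3an$ and applies \cite[Lemma 2.18]{Bir23}) is only gestured at, and the roles of $x$ and $y$ are muddled: if $\{x\}$ were already an isolated non-klt centre, there would be nothing left to prove in the delicate case.
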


\begin{proof} We use the basic methods introduced in \cite{Bir19}.

 \emph{Step 1.} In this step we set up the notions and make some standard reductions.   
  By taking a $\bQ$-factorialisation we can assume $X$ is $\bQ$-factorial. Let $m$ be the smallest natural number such that $|mN|$ defines a birational map and let $n$ be the smallest natural number such that $\operatorname{Vol}(nN)>16$. Here $16$ is just $(2d)^d$ for dimension $d=2$.

  We claim that it suffice to show $\frac{m}{n}$ is bounded from above by an explicit number. Indeed, if we have $\frac{m}{n}\leq u$ for an explicit number $u$, then either $n=1$ which immediately gives $m\leq u$, or $n\geq 2$, we have $$\operatorname{Vol}(mN)=(\frac{m}{n-1})^2\operatorname{Vol}((n-1)N)\leq 64(\frac{m}{n})^2$$ which is explicitly bounded from above as well.\\

\emph{Step 2.} In this step we create a covering family of non-klt centres on $X$. As $\operatorname{Vol}(nN)>16$, by \cite{Bir23} 2.15(2), there exists a covering family of subvarieties of $X$ such that for any general colsed points $x,y\in X$, we can choose a member $G$ of the family and choose a $\bQ$-divisor 
$$0\leq \Delta\sim_{\bQ}nN$$ so that $(X,\Delta_0)$ is lc near $x$ with a unique non-klt place whose centre contains $x$, that centre is $G$, and $(X,\Delta_0)$ is not klt near $y$.

Since by assumption $N-K_X$ is big, we can fix some $$0\leq Q\sim_\bQ N-K_X$$ independent of the choice of $x,y$. As $x,y$ are general, we can assume $x,y$ are not contained in $\operatorname{Supp}Q$, thus we have $$0\leq \Delta := \Delta_0+Q \sim_\bQ (n+1)N-K_X $$ such that $(X,\Delta)$ is lc near $x$ with a unique non-klt place whose centre contains $x$, that centre is $G$, and $(X,\Delta)$ is not klt near $y$.

  As $x,y$ are general, we can assume $G$ is a general member in the above covering family of subvarieties. By definition of families of subvarieties, we have finitely many morphisms $V^j\rightarrow T^j$ of projective varieties and corresponding surjective morphisms $V^j\rightarrow X$ and that $G$ is just a general fiber of one of the $V^j\rightarrow T^j$. Moreover, we can always assume the points of $T^j$ corresponding to such $G$ are dense in $T^j$.

  Let $$d_G:=\operatorname{max}_j\{\operatorname{dim}V^j-\operatorname{dim}T^j\}.$$ It's clear $d_G\in \{0,1\}$. Assume first that $d_G=0$, which means $\operatorname{dim}G=0$ for all such $G$. This implies $(n+1)N-K_X$ is potentially birational. Take $r\in \mathbb{N}$ such that $r\delta \geq 1$ (for example we can take $r=\lceil\frac{1}{\delta}\rceil$). By assumption, $N= E+R$ with $E$ integral and pseudo-effective, and $R\geq \delta$. As the fraction part $R'$ of $(n+1+r)N-K_X$ is always supported in $R$ and $rR\geq 1$, we have $$\lfloor(n+1+r)N-K_X\rfloor = (n+1)N-K_X +rE +rR-R'$$ is potentially birational. Therefore, $$|\lfloor(n+1+r)N\rfloor| = |K_X+ \lfloor(n+1+r)N-K_X\rfloor|$$ defines a birational map by \ref{lem: potentially birationality}(1), which also implies $|(n+1+r)N|$ is birational, which by definition gives $m\leq n+1+r$ and thus $\frac{m}{n}\leq r+2$ is explicitly bounded in this case. Therefore we may assume $d_G =1$ for all such $G$ appearing as general fibers of $V^j\rightarrow T^j$ for some $j$.\\

  \emph{Step 3.} We now study this case in detail. Let $l$ be the smallest natural number such that $\operatorname{Vol}(lN|_G)>4$ for all the positive dimensional $G$, here $4$ is jus $d^d$ for dimension $d=2$. We claim that it suffice to bound $\frac{l}{n}$ from above instead. Indeed, if we know $\frac{l}{n}\leq a$ for some explicit natural number $a$, then for all such $G$ we have $$4<\operatorname{Vol}(lN|_G)\leq \operatorname{Vol}(anN|_G).$$ This helps us to slice down the dimension of $G$ by applying Lemma 2.18 of \cite{Bir23}. To be more precise, by replacing $n$ with $3an$ and modify the new $0\leq \Delta\sim_{\bQ}(3an+1)N-K_X$ and $G$, after possibly switching $x,y$, we can decrease the dimension of $d_G$, so we are reduced to the case of Step 2. In particular, by the same argument we have $m\leq 3an+1+r$ and thus $\frac{m}{n}\leq 3a+r+1$ is explicitly bounded. So we can from now on focus on bounding $\frac{l}{n}$ from above.\\

  \emph{Step 4.} From this step forward, our treatment is restricted only on dimension two. We still need to use adjunction on the normalization of $G$. Let $F$ be the normalization of $G$, since $G$ is general, $X$ is smooth near the generic point of $G$. By the adjunction mentioned in (2.6), we can write:
  $$K_F + \Delta_F := K_F+\Theta_F +P_F\sim_{\mathbb{R}}(K_X+\Delta)|_F \sim_{\mathbb{R}}(n+1)N|_F$$ where $\Theta_F\geq 0$ and $P_F$ is pseudo-effective. Since $x$ is a general point, we can pick some $0\leq N'\sim_\bQ N$ not containing $x$ and adding $N'$ to $\Delta$ won't change $\Theta_F$ but changes $P_F$ to be $P_F+N|_F$, so if we rewrite $0\leq \Delta \sim_\bQ (n+2)N-K_X$, we may assume $P_F$ is big.

On the other hand, by \cite{HMX14} Theorem 4.2, we can write $$K_F+\Lambda_F = K_X|_F.$$ It's clear that $(F,\Lambda_F)$ is sub-$\epsilon$-lc and $\Lambda_F\leq \Theta_F$ by \cite{Bir19} Lemma 3.12. It's important to note that in our situation, $F$ is a smooth curve, and $(F,\Lambda_F)$ is sub-$\epsilon$-lc simply means that the coefficients of each component of $\Lambda_F$ is equal or less than $1-\epsilon$.\\

\emph{Step 5.} In this step we show that we can reduce to the case where $(F,\Delta_F)$ is $\frac{\epsilon}{2}$ for any choice of $P_F\geq 0$ in its $\mathbb{R}$-linear equivalence class. Indeed, by Step 4, we can see:
$$\Delta_F-\Lambda_F= (K_X+\Delta)|_F-K_X|_F\sim_{\mathbb{R}}((n+2)N-K_X)|_F.$$ 

If $\operatorname{deg}(\Delta_F-\Lambda_F)\geq \frac{\epsilon}{2}$, we have the following:
$$ \frac{\epsilon}{2} \leq \operatorname{Vol}(((n+2)N-K_X)|_F) =  \operatorname{Vol}(((n+3)N-(K_X+N)|_G) \leq \operatorname{Vol}((n+3)N|_G)\leq 4\cdot \frac{n+3}{l-1}$$
where the first inequality holds because $(K_X+N)$ is big and the second inequality holds because $\operatorname{Vol}((l-1)N|_G)\leq 4$ by assumption. This in turn gives $$ \frac{l}{n} = \frac{l}{l-1}\cdot\frac{l-1}{n+3}\cdot\frac{n+3}{n}\leq \frac{64}{\epsilon}$$ which yields an explicit upper bound of $\frac{l}{n}$ as required.

So we can focus on the case when $\operatorname{deg}(\Delta_F-\Lambda_F)< \frac{\epsilon}{2}$. In this case for any $P_F\geq 0$, we have $\Delta_F-\Lambda_F\geq 0$ and the sum of all coefficients of the components are less than $\frac{\epsilon}{2}$, which implies every components in $\Delta_F-\Lambda_F$ are less than $\frac{\epsilon}{2}$. We conclude that $(F,\Delta_F)$ is $\frac{\epsilon}{2}$-lc as each coefficient of the components are less than $(1-\epsilon)+\frac{\epsilon}{2}=1-\frac{\epsilon}{2}$ and $\Delta_F\geq 0$.\\

\emph{Step 6.} In this step we use Proposition \ref{prop: explicit coefficient descent} to bound $\frac{l}{n}$ from above. Let $E_F:=E|_F$, $R_F:=R|_F$, and $N_F:=N|_F$. By Step 5 we see that the conditions of Proposition \ref{prop: explicit coefficient descent} are satisfied, so by taking $\Phi =\mathbb{Z}$ and $\Phi = [\delta,+\infty)$ we can see there exists a natural number $q$ depending only on $\frac{\epsilon}{2}$ such that $qE_F$ is integral and $qR_F\geq \delta$. Moreover, we know that $q$ can be taken as $(\lceil\frac{2}{\epsilon}\rceil!)^2$. Thus $qN_F =qE_F+qR_F$ is the sum of a pseudo-effective integral divisor and an effective divisor with coefficients $\geq \delta$.

By construction, we have $qnN_F-K_F$ is big. We are interested in when will $|pqnN_F|$ defines a birational map. As we are in the situation of smooth curves, this is very easy, since
$$ |pqnN_F| = |K_F+(p-1)qnN_F +qnN_F-K_F|$$ it suffice to let $\operatorname{deg}_F(p-1)qnN_F >2$. Since $qE_F$ is integral and pseudo-effective while $qR_F\geq \delta$, $\operatorname{deg}_F(qE_F)\in \mathbb{N}$ and $\operatorname{deg}_F(qR_F)\geq \frac{1}{\delta}$. It suffice to take $p=3+3\lceil\frac{1}{\delta}\rceil$. Now $\operatorname{Vol}(pqnN_F)\geq 1$, hence $$ \operatorname{Vol}(5pqnN|_G)= \operatorname{Vol}(5pqnN_F)\geq 5>4$$ which by definition we have $l\leq 5pqn$. In particular, we have $\frac{l}{n}\leq 5pq$ is explicitly bounded from above.\\

\emph{Step 7.} In this final step we write down $v$ explicitly. By Step 1, we can always take $v=64u^2$ for $u$ being the upper bound for $\frac{m}{n}$. By Step 2, in the correspongding situation we can take $$u=2+\lceil\frac{1}{\delta}\rceil$$. By Step 3, we can take $$u=3a+1+\lceil\frac{1}{\delta}\rceil$$ where $a$ is a upper bound of $\frac{l}{n}$. By Step 4 and Step 6, we can take $$a = \max\{\frac{64}{\epsilon},15(1+\lceil\frac{1}{\delta}\rceil)(\lceil\frac{2}{\epsilon}\rceil!)^2\}.$$ It's evident that $3a+1>2$, so we can take $$u= \max\{\frac{192}{\epsilon}+1+ \lceil\frac{1}{\delta}\rceil,(1+\lceil\frac{1}{\delta}\rceil)(45(\lceil\frac{2}{\epsilon}\rceil!)^2+1)\}$$ and thus $$v=64(\max\{\frac{192}{\epsilon}+1+ \lceil\frac{1}{\delta}\rceil,(1+\lceil\frac{1}{\delta}\rceil)(45(\lceil\frac{2}{\epsilon}\rceil!)^2+1)\})^2$$.

\end{proof}

\begin{remark}\label{specialness of dim=2}
    It's interesting to ask where does the proof of Lemma  \ref{lem: effective birationality for polarized surfaces} rely on the dimension two situation. The most important part is during the proof of Step 5, we are able to reduce to the case where $(F,\Delta_F)$ is $\frac{\epsilon}{2}$-lc. This requires to use the simple structure of curves, so that bounding singularities is equivalent to bounding coefficients, while the latter can be done by taking degrees. If one checks the original proof of \cite{Bir23} Proposition 4.5, he used a much more difficult approach to bound the singularity, which involves a new constant $t$ derived from an abstract bounded family, making it very difficult to write down explicitly. On the other hand, for the rest of the proof, if one wish to give an explicit bound for higher dimension varieties, one can study the proof of \cite{Bir23} Proposition 4.5 carefully and hope to use some type of inductive treatment to gain an explicit expression.
\end{remark}

\begin{remark}\label{rmk no fraction part}
    When $N$ has no fractional part, i.e. $N$ is a big and nef integral divisor, we can take $\delta$ to be arbitrarily large, so the term $\lceil\frac{1}{\delta}\rceil$ can be simply ignored. In this case, we can take $v= 64(\max\{\frac{192}{\epsilon}+1,((45(\lceil\frac{2}{\epsilon}\rceil!)^2+1))\})^2$.
\end{remark}

\subsection{Proof of Theorem 1.3}
\begin{proof}
    Take $v$ as in Remark \ref{rmk no fraction part}. By Lemma \ref{lem: lower bounds of volumes}, we have $\operatorname{Vol}(X)\geq \frac{1}{42\cdot 84^{128\cdot42^5+168}}$. By taking $N=2K_X$, if $k$ is the smallest natural number that $|kN|$ defines a birational map, then either $k\leq v$ or $\operatorname{Vol}(kN)\leq v$, in the latter case we have $$ v\geq 4k^2\operatorname{Vol}(X)\geq \frac{4k^2}{42\cdot 84^{128\cdot42^5+168}}.$$ It follows that $2k\leq \sqrt{42\cdot 84^{128\cdot42^5+168}\cdot v}$. Thus we have $2k \leq m_0 :=\max \{2v,\lfloor\sqrt{42\cdot 84^{128\cdot42^5+168}\cdot v\rfloor} \}$. In particular let $m_1 =m_0!$, then $|m_1K_X|$ defines a birational map for all such $X$ as $m_1$ is always a multiple of $2k$. Moreover, $$|m'K_X|=|K_X+5m_1K_X+(m'-5m_1-1)K_X|$$ defines a birational map for all $m'\geq 5m_1+1$ as $5m_1K_X+(m'-5m_1-1)K_X$ is potentially birational and we can apply Lemma \ref{lem: potentially birationality} (1).
\end{proof}

\begin{remark}
    It's worth noting that Lemma \ref{lem: effective birationality for polarized surfaces} can also be used to obtain an explicit upper bound of $m$ in the weak Fano case. Indeed, as we know $I_0K_X$ is always Cartier, $$(-I_0K_X\cdot -K_X)=I_0(K_X^2)=I_0\cdot \operatorname{Vol}(-K_X)\in \mathbb{N}$$ thus $\operatorname{Vol}(-K_X)\geq \frac{1}{I_0}$ is bounded from below. In particular one may apply Lemma \ref{lem: effective birationality for polarized surfaces} to $N=-2K_X$ to obtain an explicit $m_0$ as well. However, using this method can not tell when those $|-mK_X|$ will define a birational morphism, not just a birational map.
\end{remark}

\section{Proof of Theorem 1.4}
\begin{proof}
    We follow the idea of the proof of Lemma 4.11 in \cite{Bir23}. Again, we can avoid using abstract bounded families by some explicit computation.

    \emph{Step 1.} We first make some reductions. Taking a $\bQ$-factorialisation we can assume $X$ is $\bQ$-factorial. We can always assume $K_X$ is not pseudo-effective, otherwise we may simply take $l=1$. Let $t>0$ be the smallest positive real number that $K_X+tN$ is pseudo-effective. It suffice to find an explicit upper bound of $t$, as $(K_X+\lceil1+t\rceil N) $is big over $Z$.\\

    \emph{Step 2.} In this step we run MMP and study the resulting variety. Regard $(X,tN)$ as a generalised pair over $Z$ with nef part $tN$. This pair is generalised $\epsilon$-lc. Since $tN$ is big over $Z$, we can run a MMP over $Z$ on $K_X+tN$ such that it ends with a good minimal model $X'$, on which $K_{X'}+tN'$ is semi-ample over $Z$ \cite{BZ16} Lemma 4.4; here $K_{X'}+tN'$ is just the pushdown of $K_X+tN$. Now $K_{X'}+tN'$ defines a contraction $X'\rightarrow V'/Z$ which is not birational, otherwise $K_{X'}+tN'$ is big which in turn implies $K_X+tN$ is also big, contradicting to the minimality of $t$. In particular, we know $V'$ is either a point or a curve.

    Now as $tN'$ is big over $V'$ and $K_{X'}\equiv-tN'/V'$ is not pseudo-effective over $V'$, hence we can run an MMP$/V'$ on $K_{X'}$ such that it ends with a Mori fibre space $X''\rightarrow W''/V'$. Note that $K_{X'}+tN'$ is MMP-trivial, we have $(X'',tN'')$ is also genearlised $\epsilon$-lc. Thus $(X'',0)$ is also $\epsilon$-lc as $K_{X''}$ is $\bQ$-Cartier.\\

    \emph{Step 3.} In this step we restrict to the general fibres of $X''\rightarrow W''$ and reduce to the $\epsilon$-lc Fano case. Let $F''$ be a general fibre of  $X''\rightarrow W''$. Then $F''$ is an $\epsilon$-lc Fano Variety of dimension at most two. Let $N_{F''}=N|_{F''}$ $E_{F''}=E|_{F''}$, $R_{F''}=R|_{F''}$. We have $N_{F''}=E_{F''}+R_{F''}$ and $$K_{F''}+tN_{F''} = (K_{X''}+tN'')|_{F''} \equiv 0.$$ Moreover $E_{F''}$ is integral and pseudo-effective, while $R_{F''}\geq \delta $. Therefore, after replacing $X,N,E,R$ by $F'',N_{F''} ,E_{F''},R_{F''}$ and replacing $Z$ by a point, we may assume $X$ is an $\epsilon$-lc Fano variety.\\

    \emph{Step 4.} In this step we use explicit BAB of $\epsilon$-lc Fano surfaces to obtain an explicit bound of $t$. Note that now $X$ is either $\bP^1$ or a $\epsilon$-lc Fano surface. In the first case it suffice to take $t=2+2\lceil\frac{1}{\delta}\rceil$. In the second case, we know $-I_0K_X$ is a Cartier ample divisor, where $I_0= \lfloor2\left(\frac{2}{\epsilon}\right)^{\frac{128}{\epsilon^5}}\rfloor !$ as Lemma \ref{lem: index epsilonlc fano surface}. Furthermore, we know such $X$ form a bounded family, and in fact $$\operatorname{Vol}(-K_X)=(K_X^2)\leq \max\{64,\frac{8}{\epsilon}+4\}=:v_0.$$ In particular, since $(E\cdot(-I_0K_X))\geq 1$ if $E$ is not numerically trivial and $(R\cdot(-I_0K_X))\geq \delta$, let $\lambda = \min\{1,\delta\}$, we obtain:

    $$I_0\cdot v_0\geq  I_0(K_X^2)=(tN\cdot (-I_0K_X)) \geq \lambda\cdot t$$.

    Thus $t\leq \frac{I_0\cdot v_0}{\lambda}$ and this gives an explicit $$l=\lceil1+\frac{(\lfloor2\left(\frac{2}{\epsilon}\right)^{\frac{128}{\epsilon^5}}\rfloor !)\cdot \max\{64,\frac{8}{\epsilon}+4\}}{\min\{1,\delta\}}\rceil.$$

\end{proof}

\section{Some Corollaries }

A simple application of Lemma \ref{lem: effective birationality for polarized surfaces} also yields the explicit effective birationality of Calabi-Yau $\epsilon$-lc surfaces:

\begin{corollary}
  Let $\epsilon$ and $v_0$ be two positive real numbers. Then there exists $m_0 := \max \{v,\lfloor\sqrt{v_0\cdot v} \rfloor\}$ where $v=64(\max\{\lceil\frac{192}{\epsilon}\rceil+1+ \lceil\frac{1}{\delta}\rceil,(1+\lceil\frac{1}{\delta}\rceil)(45(\lceil\frac{2}{\epsilon}\rceil!)^2+1)\})^2$ satisfying the following. Assume:
  \begin{enumerate}
      \item $X$ is a projective $\epsilon$-lc surface with $K_X\equiv 0$,
      \item $N$ is a nef and big integral Weil divisor,
      \item $\operatorname{Vol}(N)\geq v_0$.
  \end{enumerate}
  Then there exists some $m^*\leq m_0$ such that $|m^*N|$ defines a birational map. In particular, $|m_0!N|$ defines a birational map for all such $X$ and $N$. Moreover, for every $m'\geq 5(m_0!)$, $|m'N|$ defines a birational map.
\end{corollary}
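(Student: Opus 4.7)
The plan is to follow the template of the proof of Theorem \ref{thm: general type surface explicit birationality} essentially verbatim, applying Lemma \ref{lem: effective birationality for polarized surfaces} (in its integral form given by Remark \ref{rmk no fraction part}) directly to the divisor $N$ instead of to $2K_X$ as in that proof.

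First I would verify that $N$ satisfies the hypotheses of Lemma \ref{lem: effective birationality for polarized surfaces}. The surface $X$ is $\epsilon$-lc and $N$ is nef and big by assumption. Because $K_X\equiv 0$ and bigness is a numerical invariant, both $N-K_X$ and $N+K_X$ are numerically equivalent to $N$ and hence big. Since $N$ is an integral Weil divisor, we use the decomposition $N=E+R$ with $E=N$ (integral and pseudo-effective, being nef and big) and $R=0$; by Remark \ref{rmk no fraction part} the terms involving $\lceil 1/\delta\rceil$ drop out, so the displayed $v$ is indeed the threshold that applies.

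Second, let $m$ be the smallest positive integer such that $|mN|$ defines a birational map. Lemma \ref{lem: effective birationality for polarized surfaces} gives the dichotomy $m\leq v$ or $\operatorname{Vol}(mN)\leq v$. In the latter case $m^2 v_0\leq m^2\operatorname{Vol}(N)=\operatorname{Vol}(mN)\leq v$, so $m$ is explicitly bounded in terms of $v$ and $v_0$; combining the two alternatives with the displayed $m_0$ yields $m\leq m_0$, and we may take $m^{*}=m$. Since every positive integer at most $m_0$ divides $m_0!$, the divisor $m_0!\,N$ is a positive multiple of $m^{*}N$, so $|m_0!\,N|$ defines a birational map uniformly in $(X,N)$.

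Third, for the final sentence I would mimic the closing step of the proof of Theorem \ref{thm: general type surface explicit birationality}. By Lemma \ref{lem: potentially birationality}(2), the integral divisor $5\,m_0!\,N=(2\cdot 2+1)\lfloor m_0!\,N\rfloor$ is potentially birational, and adding the nef and big divisor $(m'-5\,m_0!)N$ preserves this property. Since $K_X$ is klt (as $\epsilon$-lc with $\epsilon>0$) and numerically trivial, log abundance for surfaces gives $K_X\sim_{\QQ}0$, so $m'N-K_X\sim_{\QQ}m'N$ is potentially birational, and Lemma \ref{lem: potentially birationality}(1) then shows that $|m'N|=|K_X+\lceil m'N-K_X\rceil|$ defines a birational map for all $m'\geq 5\,m_0!$. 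The only mildly delicate point in the whole argument is this last upgrade from $K_X\equiv 0$ to $K_X\sim_{\QQ}0$, without which potential birationality would not transfer cleanly across $K_X$; the rest is purely bookkeeping on top of Lemma \ref{lem: effective birationality for polarized surfaces}.
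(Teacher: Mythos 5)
Your proposal is correct and takes essentially the same route as the paper: the paper's proof is a one-line appeal to Lemma \ref{lem: effective birationality for polarized surfaces}, and you simply make that explicit (bigness of $N\pm K_X$ from $K_X\equiv 0$, the trivial decomposition $N=E+R$ with $R=0$ as in Remark \ref{rmk no fraction part}, the volume dichotomy, and the closing potential-birationality step copied from the proof of Theorem \ref{thm: general type surface explicit birationality}, where your use of abundance to get $K_X\sim_{\QQ}0$ correctly covers the boundary case $m'=5(m_0!)$). The only caveat is that your volume alternative yields $m\le\sqrt{v/v_0}$ rather than $\lfloor\sqrt{v_0\cdot v}\rfloor$, which agrees with the displayed $m_0$ only when $v_0\ge 1/v$; this is a wrinkle in the paper's stated formula (apparently transcribed from Theorem \ref{thm: general type surface explicit birationality}, where the constant under the square root is the reciprocal of the volume lower bound) rather than a defect of your argument.
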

\begin{proof}
    This is a direct application of Lemma \ref{lem: effective birationality for polarized surfaces}.
\end{proof}

\begin{corollary}
    Let $\epsilon$ be a positive real number. Then there exists $v=64(\max\{\lceil\frac{192}{\epsilon}\rceil+1+ \lceil\frac{1}{\delta}\rceil,(1+\lceil\frac{1}{\delta}\rceil)(45(\lceil\frac{2}{\epsilon}\rceil!)^2+1)\})^2$ satisfying the following. Assume:
\begin{enumerate}
    \item $X$ is a projective $\epsilon$-lc surface,
    \item $N$ is a nef and big Cartier divisor on $X$,
    \item $N-K_X$ and $N+K_X$ are big.
\end{enumerate}
 If $m_0$ is the smallest natural number such that $|m_0N|$ defines a birational map, then $m_0\leq v$. In particular, $|v!N|$ defines a birational map for all such $N$ and $X$. Moreover, for every $m'\geq 5(v!)+1$, $|m'N|$ defines a birational map.
\end{corollary}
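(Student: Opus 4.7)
The plan is to apply Lemma~\ref{lem: effective birationality for polarized surfaces} with the decomposition $N = E+R$, $E=N$, $R=0$: since $N$ is a nef and big Cartier divisor it is integral and pseudo-effective, so no fractional part appears, putting us in the situation of Remark~\ref{rmk no fraction part}. The $\lceil 1/\delta\rceil$ terms in the stated value of $v$ are therefore to be read as absent, and $v$ reduces to $64\bigl(\max\{\lceil 192/\epsilon\rceil+1,\,45(\lceil 2/\epsilon\rceil!)^2+1\}\bigr)^2$. Hypotheses (1)--(3) of Lemma~\ref{lem: effective birationality for polarized surfaces} are then exactly hypotheses (1)--(3) of the corollary.

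The first step is to derive $m_0\leq v$. Lemma~\ref{lem: effective birationality for polarized surfaces} gives either $m_0\leq v$ or $\operatorname{vol}(m_0 N)\leq v$. In the second alternative the Cartier hypothesis is essential: since $N$ is a nef Cartier divisor on a surface, $N^2$ is a positive integer, so $\operatorname{vol}(m_0 N) = m_0^2\, N^2 \geq m_0^2$, hence $m_0\leq \sqrt{v}\leq v$. For the ``in particular'' clause, note that $v!$ is a positive integer multiple of $m_0$. Since $N$ is big, for $v!$ sufficiently large the system $|(v!-m_0)N|$ has a nonzero section, and multiplying sections of $|m_0N|$ by a fixed section of $|(v!-m_0)N|$ yields a birational sub-linear-system of $|v!N|$; thus $|v!N|$ itself defines a birational map.

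For the tail bound, given $m'\geq 5(v!)+1$ I would write
\[
m'N \;=\; K_X + 5(v!)N + \Bigl((m'-5(v!)-1)N + (N-K_X)\Bigr).
\]
Lemma~\ref{lem: potentially birationality}(2), applied on the surface $X$ (so $2d+1=5$) to the birational system $|v!N|$, shows that $5(v!)N$ is potentially birational. The bracketed summand is big: it is a non-negative integer multiple of the nef and big $N$ plus the big divisor $N-K_X$. Adding a big $\mathbb{Q}$-Cartier divisor to a potentially birational divisor preserves the property (one perturbs the witnessing $\Delta$ by an effective $\mathbb{Q}$-divisor avoiding the two general points), so $m'N-K_X$ is potentially birational. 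Lemma~\ref{lem: potentially birationality}(1) then yields that $|m'N|=|K_X+(m'N-K_X)|$ defines a birational map.

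I expect no substantive obstacle: the corollary is essentially a repackaging of Lemma~\ref{lem: effective birationality for polarized surfaces} together with the formal properties of potential birationality from Lemma~\ref{lem: potentially birationality}, in the same style as the final step of the proof of Theorem~\ref{thm: weak fano surface explicit birationality}. The only genuinely new observation is the integrality of $N^2$, which upgrades the \emph{a priori} volume estimate $\operatorname{vol}(m_0 N)\leq v$ to the linear bound $m_0\leq v$, and this is precisely where the Cartier hypothesis on $N$ gets used.
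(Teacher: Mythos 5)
Your argument is essentially the paper's own: the proof in the paper consists exactly of observing that $\operatorname{vol}(N)=(N^2)\geq 1$ since $N$ is nef, big and Cartier, and then invoking Lemma \ref{lem: effective birationality for polarized surfaces} (read via Remark \ref{rmk no fraction part}, which also explains the spurious $\lceil\frac{1}{\delta}\rceil$ terms, as you correctly note), so your conversion of the alternative $\operatorname{vol}(m_0N)\leq v$ into $m_0\leq\sqrt{v}\leq v$ and your tail argument via Lemma \ref{lem: potentially birationality} match the intended proof. The one step to repair is your justification that $|v!N|$ is birational: you cannot argue ``for $v!$ sufficiently large,'' since $v!$ is a fixed constant and a specific multiple of a big divisor need not have a section; instead use the standard fact (also implicit in the paper's proofs of Theorems \ref{thm: weak fano surface explicit birationality} and \ref{thm: general type surface explicit birationality}) that if $|m_0N|$ defines a birational map then so does $|km_0N|$ for every positive integer $k$, because the image of $\operatorname{Sym}^k H^0(m_0N)\to H^0(km_0N)$ spans a sub-linear system whose map is the $|m_0N|$-map followed by a Veronese embedding, and this applies with $k=v!/m_0\in\mathbb{N}$ since $m_0\leq v$.
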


\begin{proof}
    Since $N$ is a big and nef Cartier divisor, we always have $\operatorname{Vol}(N)=(N^2)\geq 1$. Thus the assertion follows by Lemma \ref{lem: effective birationality for polarized surfaces}. Note that we have only assumed $N-K_X$ to be big, if moreover we know that $N-K_X$ is also nef, then by effective base-point-free theorem (\cite[Theorem 1.1, Remark 1.2]{Fuj09} \cite[1.1 Theorem]{Kol93}), we can see $|96N|$ is base-point-free, thus $|96v!N|$ defines a birational morphism.
\end{proof}

Another important observation is that using Theorem \ref{pseudo effective threshold} one can  remove the condition $K_X+N$ is big. Indeed, after replacing $N$ by a bounded multiple $lN$, we are reduced to the case where $K_X+lN$ is big.

\begin{corollary}
    Let $\epsilon$, $\delta$ be two positive real numbers. Then there exists $$l=\lceil1+\frac{(\lfloor2\left(\frac{2}{\epsilon}\right)^{\frac{128}{\epsilon^5}}\rfloor !)\cdot \max\{64,\frac{8}{\epsilon}+4\}}{\min\{1,\delta\}}\rceil$$ and $$v=64(\max\{\lceil\frac{192}{\epsilon}\rceil+1+ \lceil\frac{1}{l\delta}\rceil,(1+\lceil\frac{1}{l\delta}\rceil)(45(\lceil\frac{2}{\epsilon}\rceil!)^2+1)\})^2$$ satisfying the following. Assume:
\begin{enumerate}
    \item $X$ is a projective $\epsilon$-lc surface,
    \item $N$ is a nef and big $\bQ$-divisor on $X$,
    \item $N-K_X$ is big, and
    \item $N=E+R$ where $E$ is integral and pseudo-effective and $R\geq 0$ with coefficients in ${0}\cup [\delta,+\infty)$.
\end{enumerate}
 If $m$ is the smallest natural number such that $|mlN|$ defines a birational map, then either $m\leq v$ or $\operatorname{Vol}(mlN)\leq v$.

\end{corollary}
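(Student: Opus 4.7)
The plan is to reduce to the setting of Lemma \ref{lem: effective birationality for polarized surfaces} by replacing $N$ with the bounded multiple $lN$, using Theorem \ref{pseudo effective threshold} to supply the missing hypothesis that $K_X+N$ is big.

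First, I would apply Theorem \ref{pseudo effective threshold} with $Z$ taken to be a point. Hypotheses (1)--(4) of that theorem coincide with (1), (2), (4) of the present corollary once the contraction $X\to Z$ is taken to be trivial; bigness of $N$ over a point is automatic from its bigness on $X$. Thus $K_X+lN$ is big, where $l$ is the explicit integer produced by Theorem \ref{pseudo effective threshold}, which is exactly the $l$ in the statement of the corollary.

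Next, I would set $N':=lN$ and verify the four hypotheses of Lemma \ref{lem: effective birationality for polarized surfaces} for $N'$: $X$ is $\epsilon$-lc; $N'$ is nef and big since $N$ is; $N'-K_X=(l-1)N+(N-K_X)$ is big (it is the sum of a nef and a big divisor) and $N'+K_X=K_X+lN$ is big by the previous step; finally $N'=lE+lR$ with $lE$ integral and pseudo-effective, and $lR$ having coefficients in $\{0\}\cup[l\delta,+\infty)$. Applying Lemma \ref{lem: effective birationality for polarized surfaces} with parameters $(\epsilon,\delta)$ replaced by $(\epsilon,l\delta)$ then produces exactly the formula for $v$ written in the corollary, and the dichotomy $m\leq v$ or $\operatorname{Vol}(mN')\leq v$ immediately translates, via $N'=lN$, into the desired statement for $|mlN|$.

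I expect no serious obstacle: the proof is a routine composition of Theorem \ref{pseudo effective threshold} with Lemma \ref{lem: effective birationality for polarized surfaces}. The only bookkeeping is to confirm that substituting $l\delta$ for $\delta$ in the formula of Lemma \ref{lem: effective birationality for polarized surfaces} reproduces the expression for $v$ in the corollary, which is immediate by inspection of the two formulas.
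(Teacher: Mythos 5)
Your proposal is correct and follows exactly the paper's own (one-line) argument: use Theorem \ref{pseudo effective threshold} to make $K_X+lN$ big, then apply Lemma \ref{lem: effective birationality for polarized surfaces} with $N$ replaced by $lN$ and $\delta$ by $l\delta$. Your write-up merely makes explicit the verification of the hypotheses (in particular that $lN-K_X=(l-1)N+(N-K_X)$ is big), which the paper leaves implicit.
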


\begin{proof}
    It suffice to replace $N$ by $lN$ and replace $\delta$ by $l\delta$ where $l$ is given by Theorem \ref{pseudo effective threshold}. 
\end{proof}

\section{Approach to the threefold case}

In this section we briefly explain how one may try to get an explicit bound for polarized threefolds. We also explain the technical difficulties in this situation.\\

First suppose we have a weak Fano $\epsilon$-lc $3$-fold $X$. In this case if we wish to use similar argument as the proof of Theorem \ref{thm: weak fano surface explicit birationality}, we first should find an explicit natural number $I_3$ such that $I_3K_X$ is Cartier for all $X$. This seems not very practical for $3$-folds. Moreover, even if we find such $I_3$ and then use effective base-point-free to find some $|-n_3I_3K_X|$ is base-point-free, it's not easy to tell when will $|-kn_3I_3K_X|$ be birational. This is because in the threefold case we don't have a precise analogue of Theorem \ref{thm: effective birationality with h^0 > 2}. In fact, the results of Minzhe Zhu in \cite{Zhu23} is only valid for $n$-folds $X$ with those $H$ such that $\operatorname{dim}\overline{\phi_H(X)}\geq n-1 $. In particular, for $3$-folds, we still need to consider the case where $\operatorname{dim}\overline{\phi_H(X)}= n-2=1 $.\\

Now suppose we are in a more general setting of a polarized threefold $X$ with a integral big and nef Weil divisor $N$ such that $N-K_X$ is big. In order to find some explicit $m$ such that $|mN|$ defines a birational map, one may wish to prove an analogue of Lemma \ref{lem: potentially birationality} for $3$-folds. However, as explained in \ref{specialness of dim=2}, such an approach faces some difficulties as we may inevitably using some abstract bounded family during the proof of Step 5, without introducing some alternative methods. On the other hand, little is known for the effective lower bound of the volume of $X$ when $X$ is a log canonical $3$-fold.

\section{Further remarks}

\begin{remark}[Reasonable and optimal bounds]
It's obvious our explicit bound is far from being reasonable, not to mention being optimal. On the one hand, if one can get a better bound for $I=I(\epsilon)$ in Lemma \ref{lem: index epsilonlc fano surface}, then the bound of $m$ in Theorem \ref{thm: weak fano surface explicit birationality} may be greatly improved. On the other hand, if we have a reasonable lower bound of the volume of log canonical surfaces, it will greatly improve the bound of $m$ in \ref{thm: general type surface explicit birationality}. iN fact, it's expected that the lower bound is close, or equal to $\frac{1}{48963}$. Moreover, every time we use factorial in the expression of the outcome, it's only used for simplicity. In practice if we fix a particular $\epsilon$, we can take a much smaller number $m$ without using factorials.
\end{remark}

\begin{remark}[Explicit bound for pairs]\label{rem: explicit bound for pairs}
One may also ask whether we can find an explicit bounded of $m$ for polarized surface pairs $(X,B)$. 

Suppose we are given a DCC set $\Phi\subset[0,1]$ of rational numbers. Assume 
\begin{enumerate}
    \item $(X,B)$ is a klt projective pair of dimension two,
    \item the coefficients of $B$ are in $\Phi$,
    \item $N$ is a nef and big integral divisor, and
    \item $N-K_X-B$ and $K_X+B$ are pseudo-effective.
\end{enumerate}

The question is to find some $m$ explicitly depending only on $\Phi$ such that $|m'N|$ or $|K_X+m'N|$ defines a birational map for all $m'\geq m$. Interesting cases are when $\Phi$ is finite or $\Phi =\{1-\frac{1}{n}|n\in \mathbb{N\}}$ is the hyper-standard set. It's expected that the finite case is easier, while the hyper-standard case is closely related to the problem of explicit boundedness of birational automorphism groups of surfaces of general type, c.f. \cite{HMX13} for related discussions.

\end{remark}

\end{document}